\newtheorem{theorem}{Theorem}[section]
\newtheorem{lemma}[theorem]{Lemma}
\newtheorem{proposition}[theorem]{Proposition}
\newtheorem{corollary}[theorem]{Corollary}
\theoremstyle{definition}
\newtheorem{definition}[theorem]{Definition}
\newtheorem{remark}[theorem]{Remark}
\newtheorem{example}[theorem]{Example}
\begin{document}
\title[Mukai flops and Pl\"{u}cker formulas for HK]{Mukai flops and Pl\"{u}cker type formulas \\ for hyper-K\"{a}hler manifolds}
\author{Yalong Cao}
\address{The Institute of Mathematical Sciences and Department of Mathematics, The Chinese University of Hong Kong, Shatin, Hong Kong}
\email{ylcao@math.cuhk.edu.hk}

\author{Naichung Conan Leung}
\address{The Institute of Mathematical Sciences and Department of Mathematics, The Chinese University of Hong Kong, Shatin, Hong Kong}
\email{leung@math.cuhk.edu.hk}

\maketitle
\begin{abstract}
We study the intersection theory of complex Lagrangian subvarieties inside holomorphic symplectic manifolds. In particular, we study their behaviour under Mukai flops and give a rigorous proof of the Pl\"{u}cker type formula for Legendre dual subvarieties written down by the second author before. Then we apply the formula to study projective dual varieties in projective spaces.
\end{abstract}


\section{Introduction}
From Weinstein's point of view \cite{weinstein0, weinstein}, everything in the symplectic world is a Lagrangian submanifold. The classical intersection theory of Lagrangian submanifolds is defined by perturbing Lagrangians to have transversal intersections. Fukaya, Oh, Ohta and Ono introduced quantum corrections in terms of counting holomorphic disks bounding Lagrangians and defined their Floer theory \cite{fooo}. It was later generalized by Akaho and Joyce \cite{akahojoyce} to immersed Lagrangian submanifolds (see also Alston and Bao \cite{alstonbao} for some exact immersed cases).

The marriage of (complex) algebraic geometry and symplectic geometry produces an interesting subject which concerns properties of hyper-K\"{a}hler manifolds (or general holomorphic symplectic manifolds).
Hyper-K\"{a}hler manifolds, which have $\mathbb{S}^{2}$-twistor family of complex structures, form a particular type of even dimensional Calabi-Yau manifolds \cite{yau} and are interesting in geometry \cite{donagimarkman, hitchin, huy, leung1, markman, mukai}, string theory and gauge theory \cite{yauzaslow, bea, bleung2, bleung, leeleung, chenxi, kr, dt, cao, caoleung, caoleung3, caoleung2, caoleung5}.

In hyper-K\"{a}hler geometry, complex Lagrangian submanifolds are natural objects and serve as an important source of special Lagrangian submanifolds defined by Harvey and Lawson \cite{harveylawson}.
For complex (algebraic) Lagrangians, their intersection theory can be defined by the normal cone construction \cite{fulton} which does not perturb subvarieties to generic positions and has the advantage for actual computations.
Meanwhile, as for quantum corrections (i.e. holomorphic disks), one can show that they do not exist for generic complex structures in the twistor family (at least when Lagrangians intersect cleanly, see e.g. \cite{leung1}). Because of this, we will concentrate on the classical intersection numbers of
complex Lagrangians in hyper-K\"{a}hler manifolds.

The main purpose of this article is to study the behaviour of intersection numbers of complex Lagrangian subvarieties (more generally, half-dimensional subvarieties) under a birational transformation called the Mukai flop \cite{mukai} of projective hyper-K\"{a}hler manifolds (more generally, even dimensional projective manifolds).

Let $M$ be a $2n$-dimensional projective manifold containing a $\mathbb{P}^{n}$ with $\mathcal{N}_{\mathbb{P}^{n}/M}\cong T^{*}\mathbb{P}^{n}$.
We denote its \emph{Mukai flop} along $\mathbb{P}^{n}$ by
\begin{equation}\phi:M\dashrightarrow M^{+},  \nonumber \end{equation}
which is the composition of the blow up $\widetilde{M}$ of $M$ along $\mathbb{P}^{n}$ and the blow down of $\widetilde{M}$ along the exceptional locus in another ruling. Note that the exceptional locus blows down to a $\mathbb{P}^{n}$ in $M^{+}$ which we denote by $(\mathbb{P}^{n})^{*}\subseteq M^{+}$. If $C\subseteq M$ is a half-dimensional closed irreducible subvariety,
we denote its \emph{strict transformation}\footnote{The strict transformation is often referred as \emph{Legendre transformation} if $M$ is hyper-K\"{a}hler and $C$ is its Lagrangian subvariety.} by
\begin{equation}C^{\vee}\triangleq\overline{\phi(C\backslash \mathbb{P}^{n})}\subseteq M^{+}, \quad \textrm{if}\textrm{ } C\neq\mathbb{P}^{n}, \nonumber \end{equation}
\begin{equation}(\mathbb{P}^{n})^{\vee}\triangleq(-1)^{n}(\mathbb{P}^{n})^{*}\subseteq M^{+}, \nonumber \end{equation}
which satisfies the reflection property $(C^{\vee})^{\vee}=C$.
The behaviour of intersection numbers of half-dimensional subvarieties under Mukai flops is given by the following Pl\"{u}cker type formula.
\begin{theorem}\label{thm 1.1}(Theorem \ref{plucker formula 1}, see also Theorem 23 of \cite{leung1}) ${}$ \\
Let $\phi:M\dashrightarrow M^{+}$ be a Mukai flop along $\mathbb{P}^{n}$ ($n\geq2$) between projective manifolds, then
\begin{equation}C_{1}\cdot C_{2}+\frac{(C_{1}\cdot \mathbb{P}^{n})(C_{2}\cdot \mathbb{P}^{n})}{(-1)^{n+1}(n+1)}
=C_{1}^{\vee}\cdot C_{2}^{\vee}+\frac{(C_{1}^{\vee}\cdot (\mathbb{P}^{n})^{*})(C_{2}^{\vee}\cdot (\mathbb{P}^{n})^{*})}{(-1)^{n+1}(n+1)}
\nonumber \end{equation}
holds for $n$-dimensional closed irreducible subvarieties $C_{1}$, $C_{2}$ of $M$ and their strict transformations $C_{1}^{\vee}$, $C_{2}^{\vee}$
in $M^{+}$.
\end{theorem}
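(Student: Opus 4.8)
The plan is to resolve $\phi$ simultaneously from both sides. Let $\pi\colon\widetilde M\to M$ be the blow-up along $\mathbb{P}^n$, with exceptional divisor $j\colon E\hookrightarrow\widetilde M$; the same $\widetilde M$ is the blow-up $\pi^+\colon\widetilde M\to M^+$ along $(\mathbb{P}^n)^*$, and $E\cong\mathbb{P}(T^*\mathbb{P}^n)$ carries the two projections $p=\pi|_E\colon E\to\mathbb{P}^n$ and $p^+=\pi^+|_E\colon E\to(\mathbb{P}^n)^*$. Because $\phi$ is an isomorphism away from the two centres, $C_i\neq\mathbb{P}^n$ and its transform $C_i^\vee$ have the very same strict transform $\widetilde C_i\subset\widetilde M$. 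All products below are to be read as cohomological degrees; this matters, since $C_i^\vee$ in general contains the dual hyperplanes of the points of $C_i\cap\mathbb{P}^n$ and hence does not meet $(\mathbb{P}^n)^*$ in dimension zero. Using $\pi_*\pi^*=\mathrm{id}$ and the fact that $\pi^*$ is a ring homomorphism, I would first record $C_1\cdot C_2=\int_{\widetilde M}\pi^*[C_1]\,\pi^*[C_2]$ and the analogous identity on $M^+$ for $C_1^\vee\cdot C_2^\vee$, so that the whole formula becomes a comparison inside $H^*(\widetilde M)$.

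The heart of the argument is the class $\pi^*[C_i]-(\pi^+)^*[C_i^\vee]$. It restricts to zero on $\widetilde M\setminus E\cong M\setminus\mathbb{P}^n$, so by the localization sequence it equals $j_*\delta_i$ for some $\delta_i\in H^{2(n-1)}(E)$. Write $h=p^*c_1(\mathcal{O}_{\mathbb{P}^n}(1))$, $h^*=(p^+)^*c_1(\mathcal{O}_{(\mathbb{P}^n)^*}(1))$ and $\xi=c_1(\mathcal{O}_E(1))$; under $E\hookrightarrow\mathbb{P}^n\times(\mathbb{P}^n)^*$ one has $\xi=h+h^*$, which is ample, so cup product with $\xi$ is injective on $H^{2(n-1)}(E)$ and $\delta_i$ is unique. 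Applying $j^*$, the blow-up square $\pi\circ j=\iota\circ p$ (with $\iota\colon\mathbb{P}^n\hookrightarrow M$) gives $j^*\pi^*[C_i]=p^*\iota^*[C_i]=(C_i\cdot\mathbb{P}^n)\,h^n$, and likewise $j^*(\pi^+)^*[C_i^\vee]=(C_i^\vee\cdot(\mathbb{P}^n)^*)\,(h^*)^n$, while the self-intersection formula $j^*j_*\delta_i=-\xi\delta_i$ (the normal bundle of $E$ is $\mathcal{O}_E(-1)$) converts the difference into the clean identity $\xi\delta_i=(C_i^\vee\cdot(\mathbb{P}^n)^*)(h^*)^n-(C_i\cdot\mathbb{P}^n)h^n$. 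The point is that this pins down $\delta_i$ with no delicate analysis of how $\widetilde C_i$ meets $E$.

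Expanding $C_1\cdot C_2-C_1^\vee\cdot C_2^\vee$ through $(\pi^+)^*[C_i^\vee]=\pi^*[C_i]-j_*\delta_i$ and applying the projection formula collapses everything to the two numbers $\int_E h^n\delta_i$ and $\int_E(h^*)^n\delta_i$. These I would extract from the identity for $\xi\delta_i$ by cupping with the monomials $h^a(h^*)^b$ ($a+b=n-1$) and integrating, using $\xi=h+h^*$, the relation $\sum_{k=0}^n(-1)^k\binom{n+1}{k}h^k(h+h^*)^{n-k}=0$ coming from $c(T^*\mathbb{P}^n)=(1-h)^{n+1}$, and the normalizations $\int_E h^n(h^*)^{n-1}=\int_E h^{n-1}(h^*)^n=1$. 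This produces a linear system for the top pairings $\int_E h^{n-s}(h^*)^s\delta_i$ which closes up using the single degree-$n$ relation among these $n+1$ monomials; solving it yields the universal constant $(-1)^{n+1}(n+1)$ and hence the asserted formula. I have carried this out for $n=2$, where the relation reads $h^2-hh^*+(h^*)^2=0$ and the constant is $-3$, confirming the mechanism.

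The main obstacle I anticipate is the bookkeeping in $H^*(E)$: verifying $\xi=h+h^*$ and the precise Chern relation, and solving the pairing system cleanly for general $n$. A secondary point requiring care is the consistency of the sign $(-1)^n$ in the definition $(\mathbb{P}^n)^\vee=(-1)^n(\mathbb{P}^n)^*$ with the self-intersection $\mathbb{P}^n\cdot\mathbb{P}^n=\int_{\mathbb{P}^n}c_n(T^*\mathbb{P}^n)=(-1)^n(n+1)$, which is exactly the number secretly sitting in the denominator; and the excluded case $C_i=\mathbb{P}^n$ must be treated separately, directly from the definition of $(\mathbb{P}^n)^\vee$.
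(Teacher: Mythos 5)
Your proposal is correct, and it is a genuinely different proof from the one in the paper. The paper deduces the formula from the Kawamata--Namikawa derived equivalence $\Psi=\textbf{R}(\pi^{+})_{*}\circ\textbf{L}\pi^{*}:D^{b}(M)\rightarrow D^{b}(M^{+})$ of Theorem \ref{derived equi of Mukai flops}: Grothendieck--Riemann--Roch converts the invariance of Euler pairings into the identity (\ref{equality of char}), a support argument (Lemma \ref{chern char of dual lag}) shows $ch(\Psi(\mathcal{O}_{C_{i}}))=[C_{i}^{\vee}]+\beta_{i}[(\mathbb{P}^{n})^{*}]+\textrm{h.o.t.}$ with $\beta_{i}$ pinned down by pairing against $\mathcal{O}_{\mathbb{P}^{n}}$, and the theorem follows by applying (\ref{equality of char}) to $\mathcal{O}_{C_{1}},\mathcal{O}_{C_{2}}$. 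You bypass the categorical input entirely and argue by classical intersection theory on the common resolution $\widetilde{M}$; all the individual steps are sound: the localization sequence does produce $\delta_{i}$ with $(\pi^{+})^{*}[C_{i}^{\vee}]=\pi^{*}[C_{i}]-j_{*}\delta_{i}$, the self-intersection formula is correct (the normal bundle is $\mathcal{O}(-1,-1)|_{E}$, which on $E=\mathbb{P}(T^{*}\mathbb{P}^{n})$ is precisely the tautological $\mathcal{O}_{E}(-1)$, so $c_{1}=-\xi=-(h+h^{*})$), and your linear system closes up for every $n$, not just $n=2$: writing $a_{i}=C_{i}\cdot\mathbb{P}^{n}$, $b_{i}=C_{i}^{\vee}\cdot(\mathbb{P}^{n})^{*}$ and $w^{(i)}_{s}=\int_{E}h^{n-s}(h^{*})^{s}\delta_{i}$, cupping $\xi\delta_{i}=b_{i}(h^{*})^{n}-a_{i}h^{n}$ with the monomials of degree $n-1$ gives $w^{(i)}_{0}+w^{(i)}_{1}=b_{i}$, $w^{(i)}_{\beta}+w^{(i)}_{\beta+1}=0$ for $1\leq\beta\leq n-2$, $w^{(i)}_{n-1}+w^{(i)}_{n}=-a_{i}$, and your Chern relation (which simplifies to $\sum_{k=0}^{n}(-1)^{k}h^{k}(h^{*})^{n-k}=0$) gives $\sum_{s=0}^{n}(-1)^{s}w^{(i)}_{s}=0$; the unique solution has $\int_{E}h^{n}\delta_{i}=\frac{nb_{i}+(-1)^{n}a_{i}}{n+1}$ and $\int_{E}(h^{*})^{n}\delta_{i}=-\frac{na_{i}+(-1)^{n}b_{i}}{n+1}$, and substituting into the expansion of $C_{1}^{\vee}\cdot C_{2}^{\vee}$ yields $C_{1}\cdot C_{2}-C_{1}^{\vee}\cdot C_{2}^{\vee}=(-1)^{n+1}(b_{1}b_{2}-a_{1}a_{2})/(n+1)$, which is the stated formula; the excluded case $C_{i}=\mathbb{P}^{n}$ reduces, as you say, to $\mathbb{P}^{n}\cdot\mathbb{P}^{n}=(-1)^{n}(n+1)$, and then both sides vanish identically. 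Comparing the two routes: the paper's proof is short once the (deep, $n\geq2$) equivalence is granted, and it supports the paper's slogan that the derived equivalence itself is the master Pl\"{u}cker formula, but its Lemma \ref{chern char of dual lag} carries undetermined signs $\pm$; your proof is elementary and self-contained, needs no derived category machinery, fixes all signs along the way, and exhibits the constant $(-1)^{n+1}(n+1)$ concretely as (minus) the self-intersection $\mathbb{P}^{n}\cdot\mathbb{P}^{n}$, at the cost of the bookkeeping in $H^{*}(E)$. One small economy: the Hard Lefschetz uniqueness of $\delta_{i}$ is never needed, since any class with $j_{*}\delta_{i}=\pi^{*}[C_{i}]-(\pi^{+})^{*}[C_{i}^{\vee}]$ has the same pairings against $h^{n}$ and $(h^{*})^{n}$, and those pairings are all the computation uses.
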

This formula was first written down (without proof) by the second author in \cite{leung1}. We will give it a rigorous proof in this paper and then apply it to the study of the geometry and topology of projective duality.

Now we explain why this formula is of Pl\"{u}cker type. We consider two irreducible subvarieties $S_{i}$ ($i=1,2$) in $\mathbb{P}^{n}$ and their
projective dual $S^{\vee}_{i}\subseteq (\mathbb{P}^{n})^{*}$ in the dual projective space \cite{gkz}. The conormal varieties $C_{S_{i}}\subseteq T^{*}\mathbb{P}^{n}$ are in fact strict transformations of $C_{S^{\vee}_{i}}$ under Mukai flop $T^{*}\mathbb{P}^{n}\dashrightarrow T^{*}(\mathbb{P}^{n})^{*}$, i.e. $C_{S^{\vee}_{i}}=(C_{S_{i}})^{\vee}$. When $S_{1}\pitchfork S_{2}$, the intersection of $C_{S_{1}}$ and
$C_{S_{2}}$ happens inside the zero section of $T^{*}\mathbb{P}^{n}$, we use Theorem \ref{thm 1.1} to deduce
\begin{theorem}(Theorem \ref{plucker formula 2}, Proposition \ref{prop 1.1}, \ref{prop 1.2}) ${}$ \\
Let $S_{1}$, $S_{2}$ be two closed irreducible subvarieties in $\mathbb{P}^{n}$ ($n\geq2$) which intersect transversally and the same holds true
for their dual varieties $S_{1}^{\vee}$, $S_{2}^{\vee}$ in $(\mathbb{P}^{n})^{*}$. Then we have
\begin{equation}
(-1)^{*}\bigg(\chi(S_{1}\cap S_{2})-\frac{\chi\big(S_{1},Eu([S_{1}])\big)\chi\big(S_{2},Eu([S_{2}])\big)}{n+1}\bigg)=
\chi(S^{\vee}_{1}\cap S^{\vee}_{2})-\frac{\chi\big(S^{\vee}_{1},Eu([S^{\vee}_{1}])\big)\chi\big(S^{\vee}_{2},Eu([S^{\vee}_{2}])\big)}{n+1},
\nonumber \end{equation}
where $*=dim S_{1}+dim S_{2}+dim S^{\vee}_{1}+dim S^{\vee}_{2}$, $Eu$ is MacPherson's Euler obstruction and $\chi(-,Eu([-]))$ is the weighted Euler characteristic with respect to constructible function $Eu([-])$.
\end{theorem}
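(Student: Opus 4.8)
The plan is to realize both sides of the asserted identity as the two sides of the Plücker formula of Theorem \ref{thm 1.1}, applied to the Mukai flop $T^*\mathbb{P}^n \dashrightarrow T^*(\mathbb{P}^n)^*$ with the $n$-dimensional subvarieties taken to be the conormal varieties $C_{S_1},C_{S_2}\subseteq T^*\mathbb{P}^n$. As recalled in the introduction, these conormals are half-dimensional and projective duality is precisely their strict transformation, $(C_{S_i})^\vee=C_{S_i^\vee}$, so Theorem \ref{thm 1.1} applies with $C_i=C_{S_i}$ and $C_i^\vee=C_{S_i^\vee}$. Because $T^*\mathbb{P}^n$ is only quasi-projective, the first step is to fix a smooth projective completion on which the flop extends and to replace each conormal variety by its closure; I must then check that every intersection number occurring in Theorem \ref{thm 1.1} is accounted for by contributions in the finite part, so that the locus at infinity contributes nothing. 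Granting this, Theorem \ref{thm 1.1} reads
\[
C_{S_1}\cdot C_{S_2}+\frac{(C_{S_1}\cdot \mathbb{P}^n)(C_{S_2}\cdot \mathbb{P}^n)}{(-1)^{n+1}(n+1)}
=C_{S_1^\vee}\cdot C_{S_2^\vee}+\frac{(C_{S_1^\vee}\cdot (\mathbb{P}^n)^*)(C_{S_2^\vee}\cdot (\mathbb{P}^n)^*)}{(-1)^{n+1}(n+1)}.
\]

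The second step is a microlocal dictionary turning each intersection number into an Euler characteristic; this is the content of Propositions \ref{prop 1.1} and \ref{prop 1.2}, which I would establish via MacPherson's theory of the local Euler obstruction and Kashiwara's index theorem. The conormal cycle of $S$ is, up to sign $(-1)^{\dim S}$, the characteristic cycle of the constructible function $Eu([S])$; pairing a characteristic cycle with the zero section computes the weighted Euler characteristic of the underlying function, which here is supported on $S$, giving
\[
C_{S}\cdot \mathbb{P}^n=(-1)^{\dim S}\,\chi\big(S,Eu([S])\big).
\]
For the product, the hypothesis $S_1\pitchfork S_2$ forces $C_{S_1}$ and $C_{S_2}$ to meet only inside the zero section, since a nonzero covector cannot annihilate both $T_xS_1$ and $T_xS_2$ when these span $T_x\mathbb{P}^n$; the microlocal index formula then identifies the intersection number with the topological Euler characteristic of $S_1\cap S_2$, with an explicit dimension-dependent sign,
\[
C_{S_1}\cdot C_{S_2}=(-1)^{\dim S_1+\dim S_2+n}\,\chi(S_1\cap S_2).
\]
The identical statements hold verbatim on the dual side, using $S_1^\vee\pitchfork S_2^\vee$.

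The final step is to substitute and collect signs. On the left of the displayed Plücker identity the substitution produces $(-1)^{\dim S_1+\dim S_2+n}$ times $\chi(S_1\cap S_2)-\frac{\chi(S_1,Eu([S_1]))\chi(S_2,Eu([S_2]))}{n+1}$, the sign hidden in the denominator $(-1)^{n+1}(n+1)$ combining with $(-1)^{\dim S_1+\dim S_2}$ to flip the second term to a minus sign; the right-hand side is handled the same way with the dual dimensions. Equating the two and cancelling the common factor $(-1)^n$ leaves exactly $(-1)^{\dim S_1+\dim S_2}(\cdots)=(-1)^{\dim S_1^\vee+\dim S_2^\vee}(\cdots)$, where $(\cdots)$ denote the two bracketed expressions of the asserted identity; this rearranges into the claimed formula with global sign $(-1)^*$, $*=\dim S_1+\dim S_2+\dim S_1^\vee+\dim S_2^\vee$.

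I expect the main obstacle to be the passage to the projective setting together with the microlocal dictionary, rather than the final algebra. Concretely, I must produce a projective completion of $T^*\mathbb{P}^n$ that carries the extended Mukai flop, verify that the closures of the conormal varieties are the correct strict transforms of one another, and confirm that the intersections $C_{S_1}\cap C_{S_2}$ and $C_{S_i}\cap\mathbb{P}^n$ localize to the finite part, so that Theorem \ref{thm 1.1}, a statement about compact intersection numbers, legitimately computes the affine quantities. The other delicate point is proving the two microlocal identities above with their precise signs uniformly in $\dim S_i$; once these are in place the sign bookkeeping closes, since the dimension-dependent signs conspire with the $(-1)^{n+1}$ of Theorem \ref{thm 1.1} to yield exactly $(-1)^*$ and to clear the sign from the denominator.
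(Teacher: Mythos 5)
Your proposal is correct, and its overall architecture is exactly the paper's: compactify $T^{*}\mathbb{P}^{n}$ (the paper takes $M=\mathbb{P}(\mathcal{O}_{\mathbb{P}^{n}}\oplus T^{*}\mathbb{P}^{n})$), check that all relevant intersections stay in the finite part so that Theorem \ref{plucker formula 1} applies to the closures of the conormal varieties, and then translate each intersection number into an Euler characteristic; your final sign bookkeeping, giving the factor $(-1)^{*}$, agrees with what one gets by combining Theorem \ref{plucker formula 2} with Propositions \ref{prop 1.1} and \ref{prop 1.2}. The one genuine divergence is how you establish the dictionary. The paper proves $C_{S_{1}}\cdot C_{S_{2}}=(-1)^{\dim(S_{1}\cap S_{2})}\chi(S_{1}\cap S_{2})$ (Proposition \ref{prop 1.1}) by purely algebro-geometric excess intersection theory: near the intersection the Lagrangians are assumed smooth, and the holomorphic symplectic form identifies the excess bundle with $T^{*}(L_{1}\cap L_{2})$, so the intersection number is the Euler class of a cotangent bundle; the identity $C_{S}\cdot\mathbb{P}^{n}=(-1)^{\dim S}\chi(S,Eu([S]))$ is then quoted from MacPherson and Behrend. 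You instead derive both identities uniformly from the characteristic-cycle calculus ($CC(Eu([S]))=(-1)^{\dim S}[C_{S}]$ plus the microlocal index/intersection formula), of which the pairing with the zero section is just the special case $S_{2}=\mathbb{P}^{n}$. Your route is more uniform and makes the Chern--Mather formalism do all the work, but note that the microlocal intersection formula natively outputs the \emph{weighted} Euler characteristic $\chi\big(S_{1}\cap S_{2},Eu([S_{1}])\cdot Eu([S_{2}])\big)$; to land on the plain $\chi(S_{1}\cap S_{2})$ appearing in the statement you must observe that the transversality hypothesis places $S_{1}\cap S_{2}$ in the smooth loci of both varieties (where the Euler obstructions equal $1$) --- which is precisely the same implicit strengthening of ``transversal'' that the paper uses when it replaces the $L_{i}$ by smooth neighbourhoods of the intersection. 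The paper's excess-bundle argument, by contrast, applies verbatim to arbitrary Lagrangian subvarieties with clean compact intersection in any holomorphic symplectic manifold, independently of any conormal/microlocal interpretation.
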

As corollaries, we could apply this formula to determine degrees of projective dual varieties (Corollary \ref{deg of dual var}) which recovers Ernstr\"{o}m's generalized Pl\"{u}cker formulas (see also \cite{ern1, ern2, kleiman, matsui}). We also use it to determine
degree zero Chern-Mather classes, dimensions of projective dual varieties (see Corollary \ref{chern-mather class} and \ref{dim of dual var}).
Because of these applications to the geometry of projective duality, we call the formula in Theorem \ref{thm 1.1} of Pl\"{u}cker type.

The proof of Theorem \ref{thm 1.1} is based on an equivalence $D^{b}(M)\cong D^{b}(M^{+})$ of derived categories of coherent sheaves established by Kawamata \cite{kawamata} and Namikawa \cite{namikawa}. From this point of view, the above equivalence of categories could be regarded as a more general 'Pl\"{u}cker type formula'.

${}$ \\
\textbf{The content of this paper}: In section 2, we recall the intersection theory of complex Lagrangian subvarieties.
In section 3, using derived equivalences established by Kawamata and Namikawa, we deduce a Pl\"{u}cker type formula for intersection numbers of half-dimensional subvarieties in even dimensional projective manifolds under Mukai flops. We also apply it to deduce a Pl\"{u}cker type formula for projective dual varieties in projective spaces which has many applications to the geometry of projective duality. In the final section, we construct examples of complex Lagrangian subvarieties in hyper-K\"{a}hler manifolds, which are motivated by Donaldson-Thomas' higher dimensional gauge theories.

${}$ \\
\textbf{Acknowledgement}:
The first author thanks Garrett Alston, Dominic Joyce and his fellow colleagues Qingyuan Jiang, Ying Xie and Chuijia Wang for helpful discussions.
The work of the second author was substantially supported by grants from the Research Grants Council of the Hong Kong Special Administrative Region, China (Project No. CUHK401411 and CUHK14302714).

\section{Intersection theory of complex Lagrangians in hyper-K\"{a}hler manifolds}
In this section, we recall some standard facts about the intersection theory of complex Lagrangian subvarieties inside holomorphic symplectic manifolds.

\subsection{Some basic notions from hyper-K\"{a}hler geometry}
Let $(M,g)$ be a hyper-K\"{a}hler manifold with complex structures $I,J,K$ which satisfy the Hamilton relation
\begin{equation}I^{2}=J^{2}=K^{2}=IJK=-Id.   \nonumber \end{equation}
Then the twistor family of complex structures on $M$ can be expressed as
\begin{equation}\mathbb{S}^{2}=\{aI+bJ+cK \textrm{ }|\textrm{ } a^{2}+b^{2}+c^{2}=1 \textrm{ }\textrm{and}\textrm{ } a,b,c\in \mathbb{R} \}
\nonumber \end{equation}
Fixing a complex structure $J$, $(M,g,J)$ is a Calabi-Yau manifold which corresponds to the embedding $Sp(n)\subseteq SU(2n)$.
We denote the K\"{a}hler form by $\omega_{J}$. The parallel form
\begin{equation}\Omega_{J}=\omega_{I}-i\omega_{K}  \nonumber \end{equation}
defines a \emph{holomorphic symplectic structure} on $(M,J)$. And the $J$-holomorphic volume form for the Calabi-Yau structure on $(M,g,J)$ is just the top exterior power of $\Omega_{J}$.

A $J$-\emph{complex Lagrangian submanifold} $L\subseteq M$ is a half-dimensional submanifold with
\begin{equation}\Omega_{J}|_{L}=0 \textrm{ }\footnote{Hitchin showed that this already implies $L$ is a $J$-holomorphic submanifold of $M$ \cite{hitchin0}.}.
\nonumber \end{equation}
Similar to the real case, if $L$ is a complex Lagrangian submanifold, we have $\mathcal{N}_{L/M}\cong T^{*}L$.
In particular, for any half-dimensional Fano ($c_{1}>0$) $J$-complex submanifold, one can identify its normal bundle and cotangent bundle (see e.g. \cite{leung1}).

More generally, a \emph{complex Lagrangian subvariety} $L\subseteq M$ is an irreducible locally closed\footnote{i.e. it is the open subset of a closed subset in the Zariski topology.} subvariety whose smooth locus is a complex Lagrangian submanifold. The following result will be useful for the construction of
compact complex Lagrangian subvarieties.
\begin{proposition}\label{prop}
Let $L\subseteq M$ be a complex Lagrangian subvariety in an algebraic symplectic manifold. Then its (Zariski) closure
$\overline{L}\subseteq M$ (with reduced structures) is also a complex Lagrangian subvariety.
\end{proposition}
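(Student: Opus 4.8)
The plan is to verify directly the two defining conditions of a complex Lagrangian subvariety for $\overline{L}$: irreducibility together with half-dimensionality, and the vanishing of the holomorphic symplectic form on the smooth locus. Irreducibility is immediate, since the Zariski closure of the irreducible set $L$ is irreducible, and closure preserves dimension, so $\dim\overline{L}=\dim L=\tfrac12\dim M$; hence the smooth locus $\overline{L}^{\,\mathrm{sm}}$ is a half-dimensional complex submanifold of $M$. It therefore remains only to show that the holomorphic symplectic form $\Omega$ restricts to zero on $\overline{L}^{\,\mathrm{sm}}$.

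First I would compare the two smooth loci. Because $L$ is locally closed, it is (Zariski) open in its closure, so $L=\overline{L}\cap U$ for some open $U\subseteq M$; consequently, for every point $p\in L$ the germ of $\overline{L}$ at $p$ coincides with the germ of $L$ at $p$, and smoothness at $p$ is the same condition for both. This yields the identity $L^{\,\mathrm{sm}}=L\cap\overline{L}^{\,\mathrm{sm}}$. In particular $L^{\,\mathrm{sm}}$ is open in $\overline{L}^{\,\mathrm{sm}}$ and nonempty (the smooth locus of the variety $L$ is open and dense in $L$), and since $\overline{L}^{\,\mathrm{sm}}$ is irreducible — being a nonempty open subset of the irreducible $\overline{L}$ — any nonempty open subset of it is dense. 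Thus $L^{\,\mathrm{sm}}$ is a dense open subset of the connected complex manifold $\overline{L}^{\,\mathrm{sm}}$, and by hypothesis $\Omega$ vanishes identically on $L^{\,\mathrm{sm}}$.

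Finally I would propagate this vanishing. The restriction $\Omega|_{\overline{L}^{\,\mathrm{sm}}}$ is a holomorphic $2$-form on the complex manifold $\overline{L}^{\,\mathrm{sm}}$ which vanishes on the dense subset $L^{\,\mathrm{sm}}$; being continuous (indeed holomorphic), it must vanish on all of $\overline{L}^{\,\mathrm{sm}}$. Hence $\overline{L}^{\,\mathrm{sm}}$ is a half-dimensional submanifold on which $\Omega$ restricts to zero, i.e.\ a complex Lagrangian submanifold, so $\overline{L}$ is a complex Lagrangian subvariety as claimed. The one genuinely substantive point — and the step I expect to be the main obstacle — is precisely this last one: the boundary $\overline{L}\setminus L$ may contain smooth points of $\overline{L}$ at which the Lagrangian condition is not given a priori, and the resolution is exactly the continuity argument on the connected smooth locus, which forces $\Omega$ to vanish there as well. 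The identification $L^{\,\mathrm{sm}}=L\cap\overline{L}^{\,\mathrm{sm}}$, which hinges on the local closedness of $L$, is what guarantees that $L^{\,\mathrm{sm}}$ really is a dense open subset of $\overline{L}^{\,\mathrm{sm}}$ rather than merely a subset of $\overline{L}$.
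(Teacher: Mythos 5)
Your proposal is correct and follows essentially the same route as the paper's proof: both use the local closedness of $L$ to see that $L$ (hence $L^{\mathrm{sm}}$) is a dense open subset of the irreducible $\overline{L}^{\mathrm{sm}}$, and both conclude because the Lagrangian condition $\Omega|_{T_x\overline{L}^{\mathrm{sm}}}=0$ is a closed condition, so vanishing on a dense subset propagates to all of $\overline{L}^{\mathrm{sm}}$. Your version merely makes explicit a few points the paper leaves implicit (the identity $L^{\mathrm{sm}}=L\cap\overline{L}^{\mathrm{sm}}$ and the preservation of dimension under closure), which is fine.
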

\begin{proof}
As $L$ is locally closed, it is an open subvariety of $\overline{L}$. The irreducibility of $L$ extends to $\overline{L}$.
We are left to show its smooth locus is a complex Lagrangian submanifold.
Since $T_{x}(\overline{L})_{sm}$ being a complex Lagrangian subspace (i.e. $\Omega_{J}|_{T_{x}(\overline{L})_{sm}}=0$) is a closed condition among points $x\in(\overline{L})_{sm}$, thus $(\overline{L})_{sm}$ is also a complex Lagrangian submanifold (as $L_{sm}\subseteq(\overline{L})_{sm}$ is open and $(\overline{L})_{sm}$ is irrducible).
\end{proof}
We refer to \cite{hitchin0, huy, joyce7, leung1, leung} for more details about hyper-K\"{a}hler geometry.

\subsection{Intersection numbers of complex Lagrangians}
Let $L_{1}$ and $L_{2}$ be two closed complex Lagrangian subvarieties in a projective hyper-K\"{a}hler manifold $M$.
We have the following two types of intersection numbers (defined for any two half-dimensional subvarieties). \\
\emph{Topological intersection}: $L_{1}\cdot L_{2}\triangleq deg([L_{1}]\cup[L_{2}])$, where $[\cdot]$ denotes the Poincar\'{e} dual of the corresponding fundamental class \cite{gh}.  \\
\emph{Algebraic intersection}: $L_{1}\cdot L_{2}$ is the degree of their intersection product (defined by taking normal cone and applying the refined Gysin map \cite{fulton}). These two approaches have their own advantages and coincide with each other (see for instance Corollary 19.2 \cite{fulton}).

Next, we discuss the case when $L_{1}$ and $L_{2}$ are both smooth, following the work of Behrend and Fantechi \cite{behrend}, \cite{bf}, \cite{bf2}. The scheme theoretical intersection $X=L_{1}\cap L_{2}$ then
has a symmetric obstruction theory \cite{behrend}
\begin{equation}\varphi: E^{\bullet}\rightarrow \mathbb{L}_{X},    \nonumber \end{equation}
which can be represented as $E^{\bullet}\simeq [T^{*}M|_{X}\rightarrow T^{*}L_{1}|_{X}\oplus T^{*}L_{2}|_{X}]$ \cite{bf2}.
Then Behrend's result gives
\begin{equation}L_{1}\cdot L_{2}=deg[X,E^{\bullet}]^{vir}=\chi(X,\nu_{X}),    \nonumber \end{equation}
where $[X,E^{\bullet}]^{vir}\in A_{0}(L_{1}\cap L_{2})$ is the virtual cycle \cite{bf}, \cite{lt1} of the perfect obstruction theory $\varphi$,
$\nu_{X}$ is Behrend's constructible function of $X$ defined as MacPherson's Euler obstruction of the intrinsic normal cone of $X$, and
$\chi(X,\nu_{X})=\sum_{n\in\mathbb{Z}}n\chi\{\nu_{X}=n\}$ is the weighted Euler characteristic with respect to $\nu_{X}$. This then implies
that for smooth complex Lagrangian submanifolds $L_{1}$, $L_{2}$, their intersection number \textbf{depends only} on the scheme structure of the intersection $X=L_{1}\cap L_{2}$. In particular, we could use weighted Euler characteristics as the definition of intersection numbers even when the intersection is non-compact.

\subsection{Intersection cohomologies of complex Lagrangians}
The symmetric obstruction theory on the scheme theoretical intersection $X=L_{1}\cap L_{2}$ of two complex Lagrangian submanifolds in fact
can be enhanced to a more refined structure called $(-1)$-shifted symplectic structure in the sense of Pantev, T\"{o}en, Vaqui\'{e} and Vezzosi \cite{ptvv}. Joyce introduced d-critical loci \cite{joyce} as classical truncations of derived schemes with $(-1)$-shifted symplectic structures and showed that for any oriented d-critical loci $(X,s,K^{1/2}_{X,s})$, there exists a perverse sheaf $\mathcal{P}^{\bullet}_{X,s}$ on $X$ whose hypercohomology categorifies the weighted Euler characteristic of $X$, i.e.
\begin{equation}\sum_{i}(-1)^{i}\mathbb{H}^{i}(X,\mathcal{P}^{\bullet}_{X,s})=\chi(X,\nu_{X}).  \nonumber \end{equation}
The orientation $K^{1/2}_{X,s}$ in the Lagrangian intersection case corresponds to a choice of square roots of $K_{L_{1}}|_{X^{red}}\otimes K_{L_{2}}|_{X^{red}}$ over the reduced scheme $X^{red}$ of $X$. In particular, if $L_{i}$ ($i=1,2$) are \emph{orientable} complex Lagrangian submanifolds (i.e. $\exists$ $K^{1/2}_{L_{i}}$ ) \cite{bbdjs}, \cite{bussi}, $X=L_{1}\cap L_{2}$ is orientable.

Other interesting works towards the categorification or quantization of complex Lagrangian intersections include works of Baranovsky and Ginzburg \cite{bg}, Kapustin and Rozansky \cite{kr}, Kashiwara and Schapira \cite{kashiwaresch}, etc.
\begin{remark}
Given two \emph{orientable} complex Lagrangians $L_{1}$, $L_{2}$ in a hyper-K\"{a}hler manifold (with a fixed holomorphic symplectic form), they are real Lagrangians for a $S^{1}$-family of real symplectic structures $\omega_{\theta}$. Now we have two Floer type cohomologies associated with them, i.e. \\
\emph{Joyce}'s version: $\mathbb{H}^{*}(L_{1}\cap L_{2},\mathcal{P}^{\bullet}_{L_{1}\cap L_{2}})$; \\
\emph{Fukaya-Oh-Ohta-Ono}'s version: $HF_{\omega_{\theta}}^{*}(L_{1},L_{2})\triangleq HF_{\omega_{\theta}}^{*}(L_{1},\phi(L_{2}))$ for a symplectic form $\omega_{\theta}$ and a Hamiltonian diffeomorphism $\phi$ such that $L_{1}\pitchfork \phi(L_{2})$ \footnote{We use Novikov field as coefficient to kill torsion, so it coincides with Definition 6.5.39 of \cite{fooo}.}.

When the intersection $L_{1}\cap L_{2}$ is clean, $\mathcal{P}^{\bullet}_{L_{1}\cap L_{2}}$ is a constant sheaf and Joyce's Floer cohomology is simply the singular cohomology  $H^{*}(L_{1}\cap L_{2})$. Then there exists a spectral sequence
\begin{equation}E_{2}=H^{*}(L_{1}\cap L_{2})\Rightarrow HF_{\omega_{\theta}}^{*}(L_{1},L_{2})  \nonumber \end{equation}
converging to $HF_{\omega_{\theta}}^{*}(L_{1},L_{2})$ whose $E_{2}$ page is $H^{*}(L_{1}\cap L_{2})$ with certain coefficient in the Novikov field (see Theorem 6.1.4 of FOOO \cite{fooo} for detail).
In this set-up, for at most one exceptional $\theta\in [0,2\pi)$, there is no $J_{\theta}$-holomorphic disk bounding $L_{1}\cup L_{2}$ \footnote{See e.g. Lemma 13 of \cite{leung1}.}, and the spectral sequence degenerates at $E_{2}$ page. It is an interesting question to extend this picture to singular intersection cases.
\end{remark}

\section{Mukai flops and Pl\"{u}cker type formulas}
In this section, we study the behaviour of intersection numbers of complex Lagrangian subvarieties (more generally, half-dimensional subvarieties) inside projective hyper-K\"{a}hler manifolds (more generally, even dimensional projective manifolds) under the Mukai flop, which
is summarized as a Pl\"{u}cker type formula. Then we apply it to projective dual varieties in projective spaces and give applications to their geometry and topology.

\subsection{Mukai flops and derived equivalences}
We recall the definition of Mukai flops and how derived categories of coherent sheaves behave under these birational transformations between projective manifolds of even dimensions (besides projective hyper-K\"{a}hler manifolds).

We start with a $2n$-dimensional complex projective manifold $M$ which contains an embedded $\mathbb{P}^{n}$ such that $\mathcal{N}_{\mathbb{P}^{n}/M}\cong T^{*}\mathbb{P}^{n}$. We denote the blow up of $M$ along $\mathbb{P}^{n}$ by $\widetilde{M}$ whose exceptional divisor $E=\mathbb{P}(T^{*}\mathbb{P}^{n})\subseteq \mathbb{P}^{n}\times(\mathbb{P}^{n})^{*}$ is the incidence variety. By the adjunction formula, one has $\mathcal{N}_{E/\widetilde{M}}\cong\mathcal{O}(-1,-1)$. Thus $\widetilde{M}$ admits a blow down $\widetilde{M}\rightarrow M^{+}$ to a projective manifold $M^{+}$ whose restriction to $E$ is the projection $E\subseteq \mathbb{P}^{n}\times(\mathbb{P}^{n})^{*}\rightarrow(\mathbb{P}^{n})^{*}$ to the second factor \cite{huy1}. The birational transformation
\begin{equation}\phi:M\dashrightarrow M^{+}  \nonumber \end{equation}
given by the composition of the above blow up and down is called \emph{Mukai flop} along $\mathbb{P}^{n}$ \cite{mukai0, mukai, huy1}.

As normal bundles of $\mathbb{P}^{n}$ and $(\mathbb{P}^{n})^{*}$ are their cotangent bundles, $M$ and $M^{+}$ admit blow down to a variety $\overline{M}$. We denote the fiber product $\widehat{M}=M\times_{\overline{M}}M^{+}$ which has canonical morphisms
\begin{equation}\label{comm diagram}\xymatrix{ & \widehat{M} \ar[dr]^{\pi^{+}} \ar[dl]_{\pi}   \\  M    &  &  M^{+}  }      \end{equation}
to $M$ and $M^{+}$. $\widehat{M}$ is a normal crossing variety with two irreducible components $\widetilde{M}$ and $\mathbb{P}^{n}\times(\mathbb{P}^{n})^{*}$.
\begin{theorem}(Kawamata \cite{kawamata}, Namikawa \cite{namikawa})\label{derived equi of Mukai flops} ${}$ \\
If $n\geq2$, the functor
\begin{equation}\Psi\triangleq\textbf{R}(\pi^{+})_{*}\circ \textbf{L}\pi^{*}:D^{b}(M)\rightarrow D^{b}(M^{+})   \nonumber \end{equation}
is an equivalence of triangulated categories.
\end{theorem}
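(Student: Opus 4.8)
The plan is to recognize $\Psi$ as a Fourier--Mukai transform and verify the equivalence by Bridgeland's criterion together with a Serre-functor argument. Through the closed embedding $\widehat{M}\hookrightarrow M\times M^{+}$ the functor $\Psi=\mathbf{R}(\pi^{+})_{*}\circ\mathbf{L}\pi^{*}$ is the integral transform with kernel $\mathcal{K}\triangleq\mathcal{O}_{\widehat{M}}$, the two projections $\pi,\pi^{+}$ being the restrictions of the factor projections. Since $M$ and $M^{+}$ are smooth and projective and every map in the correspondence diagram is proper, $\Psi$ automatically admits both a left and a right adjoint, again of Fourier--Mukai type, whose kernels are obtained from $\mathcal{K}$ by Grothendieck--Serre duality; I would compute these adjoint kernels explicitly using $\mathcal{N}_{E/\widetilde{M}}\cong\mathcal{O}(-1,-1)$ and the adjunction formula on the two components of $\widehat{M}$.

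Next I would prove full faithfulness via the spanning class of skyscraper sheaves: it suffices to check that for all closed points $x,y\in M$ the canonical map $\mathrm{Ext}^{\bullet}_{M}(\mathcal{O}_{x},\mathcal{O}_{y})\to\mathrm{Hom}^{\bullet}_{M^{+}}(\Psi\mathcal{O}_{x},\Psi\mathcal{O}_{y})$ is an isomorphism. Away from $\mathbb{P}^{n}$ this is immediate, since there $\phi$ restricts to the isomorphism $M\setminus\mathbb{P}^{n}\cong M^{+}\setminus(\mathbb{P}^{n})^{*}$ and $\Psi$ is induced by it; the entire content is concentrated along the flopping locus, where one must compute the object $\Psi\mathcal{O}_{x}$ for $x\in\mathbb{P}^{n}$ explicitly. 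This is where I expect the \emph{main obstacle}: the reducibility of $\widehat{M}$ (components $\widetilde{M}$ and $\mathbb{P}^{n}\times(\mathbb{P}^{n})^{*}$ meeting along a divisor) makes $\mathbf{L}\pi^{*}$ genuinely derived, so one must track the Tor-sheaves of the non-flat pullback and then push forward along $\pi^{+}$ using the projective-bundle structure of $E=\mathbb{P}(T^{*}\mathbb{P}^{n})$ and the cohomology of line bundles on projective space. It is precisely the resulting degree and dimension estimates — controlling the off-diagonal vanishing and matching the diagonal $\mathrm{Ext}$-algebra — that require the hypothesis $n\geq2$ and break down in the degenerate case $n=1$.

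Finally I would upgrade full faithfulness to an equivalence. Because a Mukai flop is crepant, the pulled-back canonical bundles agree on the common resolution, so the Serre functors of $D^{b}(M)$ and $D^{b}(M^{+})$ are intertwined by $\Psi$; by a standard argument of Bondal and Orlov, a fully faithful exact functor between the (saturated) derived categories of smooth projective connected varieties that commutes with Serre duality and has nonzero image is automatically essentially surjective. Alternatively, and more concretely, I would use the symmetry of the construction: the reverse Mukai flop $M^{+}\dashrightarrow M$ gives a Fourier--Mukai functor $\Psi'$ in the opposite direction, and a convolution computation on $M\times M^{+}\times M$ should identify the composite kernel with $\mathcal{O}_{\Delta_{M}}$, yielding $\Psi'\circ\Psi\cong\mathrm{id}_{D^{b}(M)}$, with the symmetric statement giving $\Psi\circ\Psi'\cong\mathrm{id}$. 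This convolution reduces away from the flopping locus to the identity and at the flopping locus to the local analysis already carried out, so it introduces no difficulty beyond the second step.
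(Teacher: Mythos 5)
Note first that the paper contains no proof of Theorem \ref{derived equi of Mukai flops}: it is quoted from Kawamata \cite{kawamata} and Namikawa \cite{namikawa}, and the paper's own work begins only afterwards, with the extraction of the numerical identity (\ref{equality of char}) and Lemma \ref{chern char of dual lag}. So your proposal can only be measured against the cited proofs. Against those, your main line of argument is essentially the right reconstruction: realize $\Psi$ as the Fourier--Mukai transform with kernel $\mathcal{O}_{\widehat{M}}$ on $M\times M^{+}$, obtain both adjoints from Grothendieck duality, check full faithfulness on the spanning class of skyscraper sheaves (with all the content at points of $\mathbb{P}^{n}$, where $\mathbf{L}\pi^{*}$ is genuinely derived because $\widehat{M}$ is reducible), and conclude by the Bondal--Orlov criterion \cite{bondalorlov}, using that the flop is crepant --- indeed $\omega_{M}|_{\mathbb{P}^{n}}\cong\mathcal{O}_{\mathbb{P}^{n}}$ by adjunction since $\det T^{*}\mathbb{P}^{n}\cong\omega_{\mathbb{P}^{n}}$, so $\pi^{*}\omega_{M}\cong(\pi^{+})^{*}\omega_{M^{+}}$ holds on both components of $\widehat{M}$ --- together with indecomposability of $D^{b}(M^{+})$. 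This is the skeleton of Namikawa's argument, and the step you single out as the main obstacle (computing $\Psi(\mathcal{O}_{x})$ for $x\in\mathbb{P}^{n}$ and the resulting $\mathrm{Ext}$-algebras) is exactly where the real work lies in \cite{namikawa}; as a plan it is sound, though your sketch defers rather than performs that computation. One small correction: the hypothesis $n\geq2$ is not consumed by ``degree and dimension estimates''; it is needed for the geometric setup itself, since for $n=1$ the $\mathbb{P}^{1}$ is a divisor in the surface $M$, the blow-up is trivial, and the flop degenerates.

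The genuine error is your ``alternative, more concrete'' finish. The convolution of the two fibre-product kernels is \emph{not} $\mathcal{O}_{\Delta_{M}}$: writing $\Psi'=\mathbf{R}\pi_{*}\circ\mathbf{L}(\pi^{+})^{*}:D^{b}(M^{+})\rightarrow D^{b}(M)$ for the reverse functor, the composite $\Psi'\circ\Psi$ is a nontrivial autoequivalence of $D^{b}(M)$, namely the inverse of a $\mathbb{P}^{n}$-twist (in the sense of Huybrechts--Thomas) along a line bundle supported on the flopped $\mathbb{P}^{n}$. This ``flop-flop equals twist'' phenomenon is classical for the Atiyah flop, where the composite of the two Bondal--Orlov equivalences is an inverse spherical twist, and was established for Mukai flops by Addington, Donovan and Meachan (\emph{Mukai flops and $\mathbb{P}$-twists}). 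In particular $\Psi'\neq\Psi^{-1}$: the inverse of $\Psi$ is its adjoint, whose kernel is obtained from $\mathcal{O}_{\widehat{M}}$ by duality, not by reading the correspondence backwards. Be aware also that this failure is invisible to cohomological checks, since $\mathbb{P}^{n}$-twists act trivially on K-theory and cohomology; no Chern character computation in the spirit of (\ref{equality of char}) or Lemma \ref{chern char of dual lag} can detect the discrepancy, which is perhaps why the convolution claim looks plausible. Consequently the Serre-functor route in your third step must carry the proof of essential surjectivity on its own; fortunately, it does.
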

In particular, given two bounded complexes of sheaves $\mathcal{E}^{\bullet}_{i}$, $i=1,2$, we have isomorphisms
\begin{equation}Ext^{*}_{M}(\mathcal{E}^{\bullet}_{1},\mathcal{E}^{\bullet}_{2})\cong Ext^{*}_{M^{+}}(\Psi(\mathcal{E}^{\bullet}_{1}),\Psi(\mathcal{E}^{\bullet}_{2})),  \nonumber \end{equation}
and an equality
\begin{equation}\label{equality of char}\int_{M} \overline{ch}(\mathcal{E}^{\bullet}_{1})\cdot ch(\mathcal{E}^{\bullet}_{2})\cdot Td(M)=\int_{M^{+}} \overline{ch}(\Psi(\mathcal{E}^{\bullet}_{1}))\cdot ch(\Psi(\mathcal{E}^{\bullet}_{2}))\cdot Td(M^{+}),  \end{equation}
by the Grothendieck-Hirzebruch-Riemann-Roch theorem (see \cite{huy1}), where $\overline{ch}\triangleq\sum_{k}(-1)^{k}ch_{k}$.

\subsection{Pl\"{u}cker type formulas}
Classical Pl\"{u}cker formulas relate degrees, numbers of double points and cusps of dual curves in $\mathbb{P}^{2}$ and its dual space $(\mathbb{P}^{2})^{*}$ \cite{gh}.
The purpose of this subsection is to generalize these formulas to higher dimensions based on equality (\ref{equality of char}) which is deduced from the derived equivalence in Theorem \ref{derived equi of Mukai flops}.
\begin{definition}\label{Legendre dual}
Let $\phi:M\dashrightarrow M^{+}$ be a Mukai flop along $\mathbb{P}^{n}$, and $C$ be a half-dimensional closed irreducible subvariety in $M$.
The \emph{strict transformation} of $C$ is
\begin{equation}C^{\vee}\triangleq\overline{\pi^{+}(\pi^{-1}(C\backslash \mathbb{P}^{n}))}\subseteq M^{+}, \quad \textrm{if}\textrm{ } C\neq\mathbb{P}^{n}, \nonumber \end{equation}
\begin{equation}(\mathbb{P}^{n})^{\vee}\triangleq(-1)^{n}(\mathbb{P}^{n})^{*}\subseteq M^{+}, \nonumber \end{equation}
where $\pi: \widehat{M}\rightarrow M$, $\pi^{+}: \widehat{M}\rightarrow M^{+}$ are the canonical morphisms in (\ref{comm diagram}).
\end{definition}
\begin{remark}
$(C^{\vee})^{\vee}=C$ holds true for irreducible subvarieties.
\end{remark}
A Pl\"{u}cker type formula for strict transformations is given as follows.
\begin{theorem}\label{plucker formula 1}(Pl\"{u}cker type formula for Mukai flop) ${}$ \\
Let $\phi:M\dashrightarrow M^{+}$ be a Mukai flop along $\mathbb{P}^{n}$ ($n\geq2$) between projective manifolds, then
\begin{equation}C_{1}\cdot C_{2}+\frac{(C_{1}\cdot \mathbb{P}^{n})(C_{2}\cdot \mathbb{P}^{n})}{(-1)^{n+1}(n+1)}
=C_{1}^{\vee}\cdot C_{2}^{\vee}+\frac{(C_{1}^{\vee}\cdot (\mathbb{P}^{n})^{*})(C_{2}^{\vee}\cdot (\mathbb{P}^{n})^{*})}{(-1)^{n+1}(n+1)}
\nonumber \end{equation}
holds for $n$-dimensional closed irreducible subvarieties $C_{1}$, $C_{2}$ of $M$ and their strict transformations $C_{1}^{\vee}$, $C_{2}^{\vee}$ in $M^{+}$.
\end{theorem}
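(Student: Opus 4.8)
The plan is to translate the intersection-number identity into cohomology on $M$ and $M^+$ and then exploit the derived equivalence $\Psi$ of Theorem \ref{derived equi of Mukai flops} through the numerical invariant in equality (\ref{equality of char}). First I would take the structure sheaves $\mathcal{O}_{C_1}$ and $\mathcal{O}_{C_2}$ (as objects of $D^b(M)$) and compute the Euler-characteristic pairing $\int_M \overline{ch}(\mathcal{O}_{C_1})\cdot ch(\mathcal{O}_{C_2})\cdot Td(M)$, whose leading-degree contribution is the topological intersection number $C_1\cdot C_2$ because $C_1$ and $C_2$ are $n$-dimensional in the $2n$-dimensional $M$. Since intersection numbers are computed in the top cohomology only, the relevant part of $ch(\mathcal{O}_{C})$ is the fundamental class $[C]$ in $H^{2n}(M)$, so the pairing recovers $C_1\cdot C_2$ up to understanding how the lower-degree Chern character terms and $Td(M)$ contribute; these corrections vanish at the relevant degree, leaving exactly the topological intersection.

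The key step is then to understand $\Psi(\mathcal{O}_C)$ in $D^b(M^+)$ well enough to read off its top Chern character class. The Mukai flop is an isomorphism away from $\mathbb{P}^n$, so $\Psi(\mathcal{O}_C)$ should agree with $\mathcal{O}_{C^\vee}$ away from $(\mathbb{P}^n)^*$, and the strict transformation $C^\vee$ is precisely the closure of the image of $C\setminus\mathbb{P}^n$. The whole subtlety is concentrated along the flopping locus: $\Psi$ does not simply send $\mathcal{O}_C$ to $\mathcal{O}_{C^\vee}$, but introduces a correction supported on $(\mathbb{P}^n)^*$ whose class is governed by the intersection number $C\cdot\mathbb{P}^n$. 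I expect that computing the fundamental-class component of $ch(\Psi(\mathcal{O}_C))$ yields $[C^\vee] + \lambda\,(C\cdot\mathbb{P}^n)\,[(\mathbb{P}^n)^*]$ for an explicit rational constant $\lambda$ depending only on $n$; the sign $(-1)^n$ appearing in the definition $(\mathbb{P}^n)^\vee\triangleq(-1)^n(\mathbb{P}^n)^*$ is exactly the bookkeeping that makes the reflection property $(C^\vee)^\vee=C$ and the flop-invariance consistent.

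Concretely, I would carry out the computation on the fiber product $\widehat{M}$ using the diagram (\ref{comm diagram}): pull back $\mathcal{O}_C$ via $\mathbf{L}\pi^*$ to $\widehat{M}$, which amounts to a derived restriction to the two components $\widetilde{M}$ and $\mathbb{P}^n\times(\mathbb{P}^n)^*$, and then push forward via $\mathbf{R}(\pi^+)_*$. The contribution from $\widetilde{M}$ produces the honest strict transformation $C^\vee$, while the contribution from the $\mathbb{P}^n\times(\mathbb{P}^n)^*$ component is supported over $(\mathbb{P}^n)^*$ and, after pushforward, contributes a multiple of $[(\mathbb{P}^n)^*]$ proportional to the degree of $C$ along $\mathbb{P}^n$, i.e. to $C\cdot\mathbb{P}^n$. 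The universal constant $\lambda=\frac{1}{(-1)^{n+1}(n+1)}$ should emerge as the self-intersection-type number $(\mathbb{P}^n)^*\cdot(\mathbb{P}^n)^*$ (equivalently, a Chern-class integral of $\mathcal{O}(-1,-1)$ over the incidence variety $E\subseteq\mathbb{P}^n\times(\mathbb{P}^n)^*$), which is a standard computation involving the Euler characteristic of projective space.

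Substituting these expansions into both sides of (\ref{equality of char}) gives
\begin{equation}
C_1\cdot C_2 + \lambda\,(C_1\cdot\mathbb{P}^n)(C_2\cdot\mathbb{P}^n)
= C_1^\vee\cdot C_2^\vee + \lambda\,(C_1^\vee\cdot(\mathbb{P}^n)^*)(C_2^\vee\cdot(\mathbb{P}^n)^*),
\nonumber
\end{equation}
since the cross terms pairing $[C^\vee]$ against $[(\mathbb{P}^n)^*]$ are symmetric in the two factors and reorganize into the stated bilinear form. The main obstacle, and the step requiring genuine care rather than routine bookkeeping, is the precise determination of $\Psi(\mathcal{O}_C)$ near the flopping locus: one must control the derived pullback-pushforward through the singular fiber product $\widehat{M}$, verify that only the fundamental-class and the $[(\mathbb{P}^n)^*]$-terms survive in top degree (all intermediate Chern-character and Todd contributions being irrelevant for the $0$-dimensional intersection), and pin down the constant $\lambda$ together with its sign. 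Once that local analysis is in hand, the formula follows by linearity of the pairing and the equality (\ref{equality of char}).
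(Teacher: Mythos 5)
Your overall frame --- feeding $\mathcal{O}_{C_{1}}$, $\mathcal{O}_{C_{2}}$ into the numerical consequence (\ref{equality of char}) of the Kawamata--Namikawa equivalence and expanding the degree-$n$ Chern characters of $\Psi(\mathcal{O}_{C_{i}})$ --- is exactly the paper's strategy, and your reduction of each side to the degree-$n$ classes (Todd and higher-order terms dropping out for dimension reasons) is correct. The gap is in the step you yourself flag as the crux: your ansatz $ch_{n}(\Psi(\mathcal{O}_{C}))=[C^{\vee}]+\lambda\,(C\cdot\mathbb{P}^{n})\,[(\mathbb{P}^{n})^{*}]$ with $\lambda$ a universal constant is false. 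The correct expansion (Lemma \ref{chern char of dual lag} of the paper) is
\begin{equation}
ch_{n}(\Psi(\mathcal{O}_{C}))=[C^{\vee}]+\frac{\pm\, C\cdot\mathbb{P}^{n}-C^{\vee}\cdot(\mathbb{P}^{n})^{*}}{(-1)^{n}(n+1)}\,[(\mathbb{P}^{n})^{*}],
\nonumber \end{equation}
whose coefficient genuinely involves the dual-side number $C^{\vee}\cdot(\mathbb{P}^{n})^{*}$, which is \emph{not} a function of $C\cdot\mathbb{P}^{n}$. Write $a_{i}=C_{i}\cdot\mathbb{P}^{n}$, $b_{i}=C_{i}^{\vee}\cdot(\mathbb{P}^{n})^{*}$, $e=(\mathbb{P}^{n})^{*}\cdot(\mathbb{P}^{n})^{*}=(-1)^{n}(n+1)$. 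With your $\beta_{i}=\lambda a_{i}$, $\lambda=-1/e$, the right-hand side of (\ref{equality of char}) expands to $C_{1}^{\vee}\cdot C_{2}^{\vee}-\frac{a_{1}b_{2}+a_{2}b_{1}}{e}+\frac{a_{1}a_{2}}{e}$, so your cross terms reorganize into the stated bilinear form only if $a_{1}b_{2}+a_{2}b_{1}=b_{1}b_{2}$. The paper's own Example 3.12 kills this: for conormal varieties of two transversal lines in $\mathbb{P}^{2}$ one has $a_{i}=-\chi(\mathbb{P}^{1})=-2$ and $b_{i}=\chi(pt)=1$, so your required identity reads $-4=1$. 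The reorganization works precisely when $\beta_{i}e+b_{i}=\pm a_{i}$ (the paper's coefficient), in which case the cross and square terms collapse to $\frac{a_{1}a_{2}-b_{1}b_{2}}{e}$ with the sign ambiguity cancelling.

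The second problem is that even a corrected coefficient cannot come out of the local analysis you defer to: controlling $\mathbf{R}(\pi^{+})_{*}\mathbf{L}\pi^{*}\mathcal{O}_{C}$ on the normal crossing $\widehat{M}$ (Tor corrections between the components $\widetilde{M}$ and $\mathbb{P}^{n}\times(\mathbb{P}^{n})^{*}$, non-flatness of $\pi$ over $\mathbb{P}^{n}$) is exactly the hard part, you give no argument for it, and its output would depend on how $C$ meets $\mathbb{P}^{n}$, not just on the number $C\cdot\mathbb{P}^{n}$. The paper sidesteps this entirely by a bootstrap: support considerations alone give $ch_{n}(\Psi(\mathcal{O}_{\mathbb{P}^{n}}))=\alpha[(\mathbb{P}^{n})^{*}]$ and $ch_{n}(\Psi(\mathcal{O}_{C}))=[C^{\vee}]+\beta[(\mathbb{P}^{n})^{*}]$ with unknown $\alpha,\beta$ (any $n$-cycle supported on $(\mathbb{P}^{n})^{*}$ is a multiple of its fundamental class, and $\Psi$ is the identity on the locus where the flop is an isomorphism, forcing multiplicity one along $C^{\vee}$); then the \emph{same} equality (\ref{equality of char}) applied to the pairs $(\mathcal{O}_{\mathbb{P}^{n}},\mathcal{O}_{\mathbb{P}^{n}})$ and $(\mathcal{O}_{C},\mathcal{O}_{\mathbb{P}^{n}})$ yields $\alpha^{2}=1$ and $\pm a=b+\beta e$, which is all the theorem needs. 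If you replace your one-parameter ansatz by this two-unknown ansatz and add these two auxiliary applications of (\ref{equality of char}), the rest of your algebra goes through verbatim.
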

\begin{remark}
For half-dimensional subvarieties in $M$ not containing the $\mathbb{P}^{n}$, we can similarly define their strict transformations. Since $(C_{1}\cup C_{2})^{\vee}=C_{1}^{\vee}\cup C_{2}^{\vee}$ holds for such subvarieties, the above formula can be extended to (not necessarily irreducible)
subvarieties in $M$ which do not contain the $\mathbb{P}^{n}$.
\end{remark}
\begin{proof}
We apply (\ref{equality of char}) to $\mathcal{O}_{C_{1}}$, $\mathcal{O}_{C_{2}}$, and then use the following Lemma \ref{chern char of dual lag} to determine Chern characters of $\Psi(\mathcal{O}_{C_{i}})$, $i=1,2$.
\end{proof}
\begin{lemma}\label{chern char of dual lag}
Let $\Psi:D^{b}(M)\rightarrow D^{b}(M^{+})$ be the equivalence in Theorem \ref{derived equi of Mukai flops}, then
\begin{equation}ch(\Psi(\mathcal{O}_{\mathbb{P}^{n}}))=\pm[(\mathbb{P}^{n})^{*}]+ \textrm{h.o.t} ,  \nonumber \end{equation}
\begin{equation}ch(\Psi(\mathcal{O}_{C}))=[C^{\vee}]+\frac{\pm C\cdot\mathbb{P}^{n}-C^{\vee}\cdot(\mathbb{P}^{n})^{*}}{(-1)^{n}(n+1)}[(\mathbb{P}^{n})^{*}]+ \textrm{h.o.t} ,  \nonumber \end{equation}
where $C$ is a $n$-dimensional closed irreducible subvariety in $M$ not containing the $\mathbb{P}^{n}$, $[C]$ denotes its Poincar\'{e} dual, and
$h.o.t$ stands for 'higher order terms'.
\end{lemma}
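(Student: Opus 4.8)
The plan is to split each Chern character into a ``leading'' codimension-$n$ term, dictated purely by how $\Psi$ transports cycles, and a genuinely new piece supported on $(\mathbb{P}^n)^*$, whose coefficient I will then read off from the numerical identity (\ref{equality of char}). The one geometric input I use repeatedly is that over the complement of the flopping locus the correspondence $\widehat{M}$ is the graph of the isomorphism $\phi\colon M\setminus\mathbb{P}^n\xrightarrow{\sim}M^+\setminus(\mathbb{P}^n)^*$, so that on this open locus $\Psi=\mathbf{R}(\pi^+)_*\mathbf{L}\pi^*$ is simply transport of sheaves along $\phi$. I would prove the $\mathbb{P}^n$ statement first, since its leading coefficient $\epsilon$ reappears in the second formula.

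For the class of $\mathbb{P}^n$: because $\mathbb{P}^n$ is contracted by $\phi$, the object $\Psi(\mathcal{O}_{\mathbb{P}^n})$ is set-theoretically supported on $(\mathbb{P}^n)^*$, which has codimension $n$; hence the lowest nonzero piece of $ch(\Psi(\mathcal{O}_{\mathbb{P}^n}))$ lies in $H^{2n}(M^+)$ and equals $\epsilon\,[(\mathbb{P}^n)^*]$, where $\epsilon\in\mathbb{Z}$ is the generic multiplicity (the alternating sum of generic ranks of the cohomology sheaves along $(\mathbb{P}^n)^*$). I would compute $\epsilon$ by evaluating $\mathbf{R}(\pi^+)_*\mathbf{L}\pi^*$ over the fibres of the $\mathbb{P}^{n-1}$-bundle $E=\mathbb{P}(T^*\mathbb{P}^n)\to(\mathbb{P}^n)^*$ and applying Grothendieck--Riemann--Roch there: the cohomology of $\mathcal{O}_{\mathbb{P}^{n-1}}$ forces generic rank $1$, while the shift produced by the blow-down (equivalently, by $\mathbf{L}\pi^*$ across the normal-crossing locus of $\widehat{M}$) supplies the sign, so $\epsilon=\pm1$. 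For the lemma I only need that $\epsilon=\pm1$, and I will carry the \emph{same} $\epsilon$ into the second formula.

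For a general $C\neq\mathbb{P}^n$, the graph description gives that $\Psi(\mathcal{O}_C)$ restricts to $\mathcal{O}_{C^\vee}$ on $M^+\setminus(\mathbb{P}^n)^*$, directly from the definition $C^\vee=\overline{\pi^+(\pi^{-1}(C\setminus\mathbb{P}^n))}$. Hence $ch(\Psi(\mathcal{O}_C))-[C^\vee]$ restricts to zero on that open locus, so by the localization sequence for Chow (or Borel--Moore) homology its codimension-$n$ part is supported on $(\mathbb{P}^n)^*$; since $(\mathbb{P}^n)^*$ is irreducible of dimension $n$, this part is $\alpha\,[(\mathbb{P}^n)^*]$ for some scalar $\alpha$, and thus $ch(\Psi(\mathcal{O}_C))=[C^\vee]+\alpha\,[(\mathbb{P}^n)^*]+\text{h.o.t.}$ To identify $\alpha$ I feed $\mathcal{E}_1=\mathcal{O}_{\mathbb{P}^n}$, $\mathcal{E}_2=\mathcal{O}_C$ into (\ref{equality of char}). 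The crucial simplification is a degree count: both sheaves are supported in codimension $n$, so every term of either Chern character sits in codimension $\geq n$; as the integral extracts the codimension-$2n$ component, only the leading terms survive and $Td$ drops out. On $M$ this gives $(-1)^n\,(C\cdot\mathbb{P}^n)$, while on $M^+$ it gives $(-1)^n\epsilon\big((C^\vee\cdot(\mathbb{P}^n)^*)+\alpha\,((\mathbb{P}^n)^*\cdot(\mathbb{P}^n)^*)\big)$. Using $(\mathbb{P}^n)^*\cdot(\mathbb{P}^n)^*=\int_{(\mathbb{P}^n)^*}c_n(T^*(\mathbb{P}^n)^*)=(-1)^n(n+1)$ and equating the two sides solves
\[ \alpha=\frac{\epsilon\,(C\cdot\mathbb{P}^n)-(C^\vee\cdot(\mathbb{P}^n)^*)}{(-1)^n(n+1)}, \]
which is exactly the asserted coefficient with $\pm=\epsilon$.

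I expect the only genuinely delicate step to be the explicit computation of $\Psi(\mathcal{O}_{\mathbb{P}^n})$, i.e. controlling the derived pullback $\mathbf{L}\pi^*$ across the normal-crossing locus of $\widehat{M}=\widetilde{M}\cup(\mathbb{P}^n\times(\mathbb{P}^n)^*)$ and the subsequent pushforward along the $\mathbb{P}^{n-1}$-bundle, so as to be certain the generic multiplicity is $\pm1$ with the correct shift. Everything downstream---the localization argument isolating $[(\mathbb{P}^n)^*]$, the degree count that annihilates all cross terms and $Td$, and the self-intersection number $(-1)^n(n+1)$---is then formal, and the consistency of signs across the two displayed formulas follows automatically from using a single $\epsilon$.
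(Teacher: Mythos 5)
Your architecture is the same as the paper's: peel off a leading codimension-$n$ term from each Chern character, note that whatever else lives in codimension $n$ must be a multiple of $[(\mathbb{P}^n)^*]$ by the support argument, and then solve for the unknown coefficients using the numerical identity (\ref{equality of char}) together with the degree count (which kills the Todd class and all cross terms) and the self-intersection $(\mathbb{P}^n)^*\cdot(\mathbb{P}^n)^*=(-1)^n(n+1)$. Your part for $\mathcal{O}_C$ is essentially the paper's proof verbatim. The one place you diverge is also the one genuine gap: your justification that the leading coefficient $\epsilon$ of $ch(\Psi(\mathcal{O}_{\mathbb{P}^n}))$ satisfies $\epsilon=\pm1$. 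You propose to compute $\mathbf{R}(\pi^+)_*\mathbf{L}\pi^*\mathcal{O}_{\mathbb{P}^n}$ over the fibres of $E\to(\mathbb{P}^n)^*$, but this computation is never carried out, and it is much harder than your sketch suggests: $\widehat{M}$ is the reducible normal-crossing variety $\widetilde{M}\cup(\mathbb{P}^n\times(\mathbb{P}^n)^*)$, the map $\pi$ fails to be flat precisely over $\mathbb{P}^n$ (which is exactly where $\mathcal{O}_{\mathbb{P}^n}$ is supported), so $\mathbf{L}\pi^*\mathcal{O}_{\mathbb{P}^n}$ carries Tor corrections supported on $\pi^{-1}(\mathbb{P}^n)$, and the fibre of $\pi^+$ over a point $p\in(\mathbb{P}^n)^*$ is not a $\mathbb{P}^{n-1}$ but the $\mathbb{P}^{n-1}\subset E$ glued to the whole $\mathbb{P}^n\times\{p\}$ coming from the second component. ``Generic rank $1$ plus a shift'' is an assertion, not an argument; the support fact only gives you $\epsilon\in\mathbb{Z}$, and without $|\epsilon|=1$ your final manipulation (which silently replaces $\epsilon^{-1}$ by $\epsilon$ when solving for $\alpha$) breaks down as well.

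The fix requires no new tools, and it is exactly what the paper does: apply (\ref{equality of char}) to the pair $(\mathcal{O}_{\mathbb{P}^n},\mathcal{O}_{\mathbb{P}^n})$. The same degree count you already use for $(\mathcal{O}_{\mathbb{P}^n},\mathcal{O}_C)$ gives $(-1)^n[\mathbb{P}^n]^2=n+1$ on the $M$ side and $(-1)^n\epsilon^2[(\mathbb{P}^n)^*]^2=\epsilon^2(n+1)$ on the $M^+$ side, whence $\epsilon^2=1$. That is all the lemma needs (the sign is deliberately left as $\pm$ in the statement), and it legitimizes the step $\epsilon^{-1}=\epsilon$ in your computation of $\alpha$. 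With this substitution, the explicit analysis of $\Psi(\mathcal{O}_{\mathbb{P}^n})$ — the step you yourself flagged as the delicate one — becomes unnecessary, and your proof coincides with the paper's.
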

\begin{proof}
(i) As $M$ and $M^{+}$ are isomorphic outside $\mathbb{P}^{n}$ and $(\mathbb{P}^{n})^{*}$, $supp(\Psi(\mathcal{O}_{\mathbb{P}^{n}}))\subseteq (\mathbb{P}^{n})^{*}$. Then
\begin{equation}ch(\Psi(\mathcal{O}_{\mathbb{P}^{n}}))=\alpha \textrm{ }[(\mathbb{P}^{n})^{*}]+ \textrm{h.o.t}. \nonumber \end{equation}
Applying (\ref{equality of char}) to $\mathcal{O}_{\mathbb{P}^{n}}$ and $\mathcal{O}_{\mathbb{P}^{n}}$, we obtain $\alpha^{2}=1$.

(ii) Away from $(\mathbb{P}^{n})^{*}\subseteq M^{+}$, we have
\begin{equation}supp(\Psi(\mathcal{O}_{C}))\backslash (\mathbb{P}^{n})^{*}=\pi^{+}(\pi^{-1}(C\backslash \mathbb{P}^{n}))=C^{\vee}\backslash (\mathbb{P}^{n})^{*}. \nonumber \end{equation}
Since $supp(\Psi(\mathcal{O}_{C}))$ is closed, it then contains $C^{\vee}$ (with multiplicity one) and
\begin{equation}ch(\Psi(\mathcal{O}_{C}))=[C^{\vee}]+\beta \textrm{ }[(\mathbb{P}^{n})^{*}]+ \textrm{h.o.t}.  \nonumber \end{equation}
Applying (\ref{equality of char}) to $\mathcal{O}_{C}$ and $\mathcal{O}_{\mathbb{P}^{n}}$, we obtain
$\pm C\cdot\mathbb{P}^{n}=C^{\vee}\cdot(\mathbb{P}^{n})^{*}+\beta\cdot(\mathbb{P}^{n})^{*}\cdot(\mathbb{P}^{n})^{*}$.
\end{proof}
For hyper-K\"{a}hler manifold $M$ and its Lagrangian subvariety $C$, the above strict transformation is often referred as Legendre transformation. The Pl\"{u}cker type formula then gives constrains to Legendre dual Lagrangians inside hyper-K\"{a}hler manifolds
\begin{corollary}
Let $M$ be a projective hyper-K\"{a}hler manifold containing a $\mathbb{P}^{n}$ ($n\geq2$), then
\begin{equation}(n+1)(\chi(C)-\chi(C^{\vee}))=(C\cdot \mathbb{P}^{n})^{2}-(C^{\vee}\cdot (\mathbb{P}^{n})^{*})^{2} \nonumber \end{equation}
holds if $C$ is a compact complex Lagrangian submanifold such that $C^{\vee}$ is also smooth.
\end{corollary}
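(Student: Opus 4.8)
The plan is to specialize the Pl\"ucker type formula of Theorem \ref{plucker formula 1} to the diagonal case $C_{1}=C_{2}=C$ and then convert the two self-intersection numbers $C\cdot C$ and $C^{\vee}\cdot C^{\vee}$ into topological Euler characteristics. Setting $C_{1}=C_{2}=C$ in Theorem \ref{plucker formula 1} immediately yields
\begin{equation}
C\cdot C+\frac{(C\cdot \mathbb{P}^{n})^{2}}{(-1)^{n+1}(n+1)}=C^{\vee}\cdot C^{\vee}+\frac{(C^{\vee}\cdot (\mathbb{P}^{n})^{*})^{2}}{(-1)^{n+1}(n+1)},\nonumber
\end{equation}
so the entire corollary reduces to identifying the two self-intersection numbers.

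The key geometric input I would isolate as a lemma: for a smooth compact complex Lagrangian submanifold $C\subseteq M$ one has $C\cdot C=(-1)^{n}\chi(C)$, where $\chi$ is the topological Euler characteristic. Indeed, the self-intersection number of the smooth $n$-dimensional $C$ inside the $2n$-dimensional $M$ is $C\cdot C=\int_{C}e(\mathcal{N}_{C/M})=\int_{C}c_{n}(\mathcal{N}_{C/M})$. Since $C$ is complex Lagrangian we have $\mathcal{N}_{C/M}\cong T^{*}C$, and $c_{n}(T^{*}C)=(-1)^{n}c_{n}(TC)$; the Gauss-Bonnet theorem then gives $\int_{C}c_{n}(TC)=\chi(C)$, proving the claim. (As a sanity check, a $(-2)$-curve $\mathbb{P}^{1}$ on a K3 surface satisfies $C\cdot C=-2=(-1)^{1}\chi(\mathbb{P}^{1})$.)

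To apply the same lemma to $C^{\vee}$ I first need $C^{\vee}$ to be complex Lagrangian in $M^{+}$, which is exactly where the smoothness hypothesis enters. The Mukai flop $\phi$ is a biholomorphism between $M\setminus \mathbb{P}^{n}$ and $M^{+}\setminus (\mathbb{P}^{n})^{*}$ and, being an isomorphism in codimension one between holomorphic symplectic manifolds, it identifies the two symplectic forms on this common dense open set up to a scalar that is irrelevant to the vanishing condition $\Omega|_{C^{\vee}}=0$. Hence $C^{\vee}\setminus (\mathbb{P}^{n})^{*}$ is Lagrangian; since $C^{\vee}$ is assumed smooth and irreducible and being Lagrangian is a closed condition on its points, the argument of Proposition \ref{prop} extends the Lagrangian property across the exceptional locus to all of $C^{\vee}$. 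Therefore $\mathcal{N}_{C^{\vee}/M^{+}}\cong T^{*}C^{\vee}$ and $C^{\vee}\cdot C^{\vee}=(-1)^{n}\chi(C^{\vee})$.

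Substituting both evaluations into the specialized formula, multiplying through by $(-1)^{n}$, and using $(-1)^{n}/(-1)^{n+1}=-1$, the signed quadratic terms collapse to
\begin{equation}
\chi(C)-\frac{(C\cdot \mathbb{P}^{n})^{2}}{n+1}=\chi(C^{\vee})-\frac{(C^{\vee}\cdot (\mathbb{P}^{n})^{*})^{2}}{n+1},\nonumber
\end{equation}
and clearing the denominator gives the stated identity. I expect the only genuine obstacle to be the step that $C^{\vee}$ is Lagrangian rather than merely smooth, namely checking that the flop preserves the symplectic form on the overlap and that smoothness lets the Lagrangian condition propagate across $(\mathbb{P}^{n})^{*}$; once this is in place the remainder is purely sign bookkeeping.
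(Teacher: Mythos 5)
Your proposal is correct and follows essentially the same route as the paper's proof: specialize Theorem \ref{plucker formula 1} to $C_{1}=C_{2}=C$, use $\mathcal{N}_{C/M}\cong T^{*}C$ to get $C\cdot C=(-1)^{n}\chi(C)$ (and likewise for $C^{\vee}$ once it is known to be Lagrangian), then rearrange signs. The only difference is that where the paper cites \cite{leung1} for the fact that the Legendre dual of a Lagrangian is again Lagrangian, you supply the argument yourself (the flop is a symplectomorphism off the flopped locus, plus the closedness argument of Proposition \ref{prop}), and that argument is sound.
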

\begin{proof}
Note that $\mathcal{N}_{C/M}\cong T^{*}C$ and $C\cdot C=(-1)^{n}\chi(C)$. Meanwhile, if $C$ is a Lagrangian, its dual is also a Lagrangian (see e.g. \cite{leung1}).
\end{proof}

\subsection{Applications to projective dual varieties}
Given an irreducible subvariety $S$ in $\mathbb{P}^{n}$, there is a notion of projective dual $S^{\vee}$ of $S$ in the dual projective space $(\mathbb{P}^{n})^{*}$ \cite{gkz}. The conormal variety $C_{S}=\overline{\mathcal{N}_{S_{sm}/\mathbb{P}^{n}}^{*}}$ of $S$ is an irreducible complex Lagrangian subvariety in $T^{*}\mathbb{P}^{n}$ whose Legendre transformation is the conormal variety $C_{S^{\vee}}=\overline{\mathcal{N}_{S^{\vee}_{sm}/\mathbb{P}^{n}}^{*}}$ of the projective dual $S^{\vee}$ \cite{leung1}.
As a consequence of Theorem \ref{plucker formula 1}, we obtain a Pl\"{u}cker type formula for dual varieties inside projective spaces.
\begin{theorem}\label{plucker formula 2}
Let $S_{1}$, $S_{2}$ be two closed irreducible subvarieties in $\mathbb{P}^{n}$ ($n\geq2$) which intersect transversally and the same holds true
\footnote{By Proposition 1.3 of \cite{gkz}, $S_{1}^{\vee}$, $S_{2}^{\vee}$ are automatically irreducible.} for their dual varieties $S_{1}^{\vee}$, $S_{2}^{\vee}$ in $(\mathbb{P}^{n})^{*}$. Then we have
\begin{equation}C_{S_{1}}\cdot C_{S_{2}}+\frac{(C_{S_{1}}\cdot \mathbb{P}^{n})(C_{S_{2}}\cdot \mathbb{P}^{n})}{(-1)^{n+1}(n+1)}
=C_{S_{1}^{\vee}}\cdot C_{S_{2}^{\vee}}+\frac{(C_{S_{1}^{\vee}} \cdot (\mathbb{P}^{n})^{*})(C_{S_{2}^{\vee}} \cdot (\mathbb{P}^{n})^{*})}{(-1)^{n+1}(n+1)},
\nonumber \end{equation}
where $C_{S_{i}}$'s are conormal varieties of $S_{i}$'s, $i=1,2$.
\end{theorem}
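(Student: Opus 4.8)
The plan is to obtain Theorem \ref{plucker formula 2} as the specialization of Theorem \ref{plucker formula 1} to the local Mukai flop $T^{*}\mathbb{P}^{n}\dashrightarrow T^{*}(\mathbb{P}^{n})^{*}$, with the zero sections playing the role of the flop centers $\mathbb{P}^{n}$ and $(\mathbb{P}^{n})^{*}$. The conormal varieties $C_{S_{i}}=\overline{\mathcal{N}_{S_{i,sm}/\mathbb{P}^{n}}^{*}}$ are closed, irreducible, $n$-dimensional (conic Lagrangian) subvarieties of $T^{*}\mathbb{P}^{n}$, hence exactly the type of half-dimensional subvariety to which Theorem \ref{plucker formula 1} applies; and by the result of \cite{leung1} quoted above, their strict (Legendre) transforms are $(C_{S_{i}})^{\vee}=C_{S_{i}^{\vee}}$. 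Granting this, the four terms of Theorem \ref{plucker formula 1} read off verbatim as the four terms of Theorem \ref{plucker formula 2}, so the only genuine issue is that $T^{*}\mathbb{P}^{n}$ is not compact while Theorem \ref{plucker formula 1} is stated for projective manifolds.

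To remedy this I would pass to the fibrewise projective completions $X=\mathbb{P}(T^{*}\mathbb{P}^{n}\oplus\mathcal{O})$ and $X^{+}=\mathbb{P}(T^{*}(\mathbb{P}^{n})^{*}\oplus\mathcal{O})$. Here $X$ is projective, the zero section $\mathbb{P}^{n}\subseteq X$ has normal bundle $T^{*}\mathbb{P}^{n}$, and the standard local model of the Mukai flop shows that $X$ and $X^{+}$ are related by a Mukai flop along this $\mathbb{P}^{n}$. Since the flop modifies only a neighbourhood of the zero section — which is disjoint from the divisor at infinity $D_{\infty}=\mathbb{P}(T^{*}\mathbb{P}^{n})$ — it restricts to an isomorphism on $D_{\infty}$, and under the usual identification of $D_{\infty}$ with the incidence variety in $\mathbb{P}^{n}\times(\mathbb{P}^{n})^{*}$ it carries the projectivised conormal $\mathbb{P}(C_{S_{i}})$ to $\mathbb{P}(C_{S_{i}^{\vee}})$ (these being the same subvariety of the incidence, by the symmetry of projective duality). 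Taking closures, one then checks $(\overline{C_{S_{i}}})^{\vee}=\overline{C_{S_{i}^{\vee}}}$.

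Next I would apply Theorem \ref{plucker formula 1} to $X\dashrightarrow X^{+}$ and the subvarieties $\overline{C_{S_{i}}}$, and then discard the contributions at infinity. The correction terms only involve intersections with the zero sections, which lie in the finite part, so they equal $C_{S_{i}}\cdot\mathbb{P}^{n}$ and $C_{S_{i}^{\vee}}\cdot(\mathbb{P}^{n})^{*}$ as in the statement. For the leading terms I would split $\overline{C_{S_{1}}}\cdot\overline{C_{S_{2}}}$ into its finite contribution and its contribution along $D_{\infty}$; transversality $S_{1}\pitchfork S_{2}$ forces the finite intersection into the zero section (a covector conormal to both $S_{1}$ and $S_{2}$ at a transverse point must vanish), so the finite contribution is exactly $C_{S_{1}}\cdot C_{S_{2}}$, and symmetrically $S_{1}^{\vee}\pitchfork S_{2}^{\vee}$ identifies the finite contribution on $X^{+}$ with $C_{S_{1}^{\vee}}\cdot C_{S_{2}^{\vee}}$. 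The key point is that the contributions of $\overline{C_{S_{1}}}\cdot\overline{C_{S_{2}}}$ and $\overline{C_{S_{1}^{\vee}}}\cdot\overline{C_{S_{2}^{\vee}}}$ coming from $D_{\infty}$ agree, because the flop is an isomorphism there carrying one configuration to the other; hence they cancel between the two sides of Theorem \ref{plucker formula 1}, leaving precisely the identity of Theorem \ref{plucker formula 2}.

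The main obstacle I anticipate is exactly this bookkeeping at infinity: one must verify that the projective completions are genuinely exchanged by a Mukai flop and that the flop is an honest isomorphism on a neighbourhood of $D_{\infty}$ intertwining $\overline{C_{S_{1}}}\cdot\overline{C_{S_{2}}}$ with $\overline{C_{S_{1}^{\vee}}}\cdot\overline{C_{S_{2}^{\vee}}}$ there, so that the (possibly nonzero) contributions at infinity cancel rather than needing to be computed. A secondary point requiring care is confirming that the finite intersection of the conormal varieties is proper and supported over $S_{1}\cap S_{2}$ (respectively $S_{1}^{\vee}\cap S_{2}^{\vee}$), which is where the two transversality hypotheses enter.
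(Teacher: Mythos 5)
Your proof is correct and follows the same route as the paper's: compactify to $M=\mathbb{P}(\mathcal{O}_{\mathbb{P}^{n}}\oplus T^{*}\mathbb{P}^{n})$, identify the strict transforms of the closures $\overline{C}_{S_{i}}$ with the closures of the dual conormal varieties via $(C_{S_{i}})^{\vee}=C_{S_{i}^{\vee}}$, apply Theorem \ref{plucker formula 1}, and pass back to the conormal varieties by localizing algebraic intersection numbers. The only genuine divergence is your handling of the divisor at infinity $D_{\infty}=\mathbb{P}(T^{*}\mathbb{P}^{n})$, where you do more work than is needed: the paper simply notes that transversality forces $\overline{C}_{S_{1}}\cap\overline{C}_{S_{2}}\subseteq\mathbb{P}^{n}$, i.e. the intersection at infinity is \emph{empty}. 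Indeed, the very observation you make for the finite part --- at a transverse intersection point, a covector conormal to both $S_{1}$ and $S_{2}$ must vanish --- applies verbatim at infinity: since the conormal varieties are conic, $\overline{C}_{S_{i}}\cap D_{\infty}=\mathbb{P}(C_{S_{i}})$, so a point of $\overline{C}_{S_{1}}\cap\overline{C}_{S_{2}}\cap D_{\infty}=\mathbb{P}(C_{S_{1}})\cap\mathbb{P}(C_{S_{2}})$ is exactly the class of a \emph{nonzero} covector conormal to both at a common point, which transversality rules out. Hence there are no contributions along $D_{\infty}$ to cancel, and the four terms of Theorem \ref{plucker formula 1} for the closures literally read off as the four terms of Theorem \ref{plucker formula 2}. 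Your cancellation argument is nevertheless sound as a fallback: Fulton's intersection product \cite{fulton} does localize to connected components of the set-theoretic intersection (which, by the two transversality hypotheses, split cleanly into a part in the zero section and a part in $D_{\infty}$), and such local contributions are preserved under the isomorphism $M\setminus\mathbb{P}^{n}\cong M^{+}\setminus(\mathbb{P}^{n})^{*}$, which matches the two configurations at infinity. So your proof is a valid, slightly heavier, variant; what the paper's one-line observation buys is that one never needs component-wise localization at infinity or the matching of local contributions under the flop.
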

\begin{proof}
Let $M=\mathbb{P}(\mathcal{O}_{\mathbb{P}^{n}}\oplus T^{*}\mathbb{P}^{n})$ be a compactification of $T^{*}\mathbb{P}^{n}$, and $\overline{C}_{S_{i}}$ be the closure of $C_{S_{i}}$ in $M$. As the intersection of $S_{1}$ and $S_{2}$ is transversal,
$C_{S_{1}}\cap C_{S_{2}}=\overline{C}_{S_{1}}\cap \overline{C}_{S_{2}}\subseteq \mathbb{P}^{n}$ and $C_{S_{i}}\cap \mathbb{P}^{n}=\overline{C}_{S_{i}}\cap \mathbb{P}^{n}$. We interpret products in the above formula using algebraic intersections. By the intersection theory \cite{fulton} (see also the following two propositions), we are then left to prove the formula for $\overline{C}_{S_{i}}\subseteq M$. As topological intersection numbers coincide with algebraic intersection numbers (see for instance, Corollary 19.2 of \cite{fulton}), by Theorem \ref{plucker formula 1}, we are done.
\end{proof}
As the intersection of $S_{1}$ and $S_{2}$ is transversal, the intersection number $C_{S_{1}}\cdot C_{S_{2}}$ depends only on the Euler characteristic of $S_{1}\cap S_{2}$.
\begin{proposition}\label{prop 1.1}
Let $L_{1}$, $L_{2}$ be two irreducible Lagrangian subvarieties inside a holomorphic symplectic manifold $M$ with transversal and compact intersection, then
\begin{equation}L_{1}\cdot L_{2}=(-1)^{dim(L_{1}\cap L_{2})}\chi(L_{1}\cap L_{2}). \nonumber \end{equation}
When applied to Theorem \ref{plucker formula 2}, we have
\begin{equation}C_{S_{1}}\cdot C_{S_{2}}=(-1)^{dim(S_{1}\cap S_{2})}\chi(S_{1}\cap S_{2}). \nonumber \end{equation}
\end{proposition}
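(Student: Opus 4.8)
The plan is to read $L_1\cdot L_2$ off the Behrend-function description recalled in Section 2.2 and then to observe that transversality collapses Behrend's function to a constant. First I would fix the meaning of the hypothesis: by a \emph{transversal} (here equivalently \emph{clean}) intersection I mean that $X=L_1\cap L_2$ lies in the smooth loci $(L_1)_{sm}\cap(L_2)_{sm}$, is itself smooth, and satisfies $T_pX=T_pL_1\cap T_pL_2$ at every $p\in X$; the conormal application in Theorem \ref{plucker formula 2} is exactly of this type. Since $X$ is compact and the intersection number lives in $A_0(X)$, hence is local near $X$, I may restrict $M$ to a neighborhood of $X$ on which $L_1,L_2$ are honest smooth complex Lagrangian submanifolds, which is what allows me to invoke the symmetric obstruction theory and Behrend's identity $L_1\cdot L_2=\chi(X,\nu_X)$ from Section 2.2 without change.

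It then remains to evaluate $\nu_X$. Because $X$ is smooth of dimension $d$, Behrend's function of its intrinsic normal cone is the constant $(-1)^d$, so $\chi(X,\nu_X)=(-1)^{\dim X}\chi(X)$, which is the asserted equality. (Should $X$ fail to be equidimensional the same computation applies on each component and one sums; our hypotheses make $X$ equidimensional, so no bookkeeping is needed.)

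To make the sign transparent and to isolate where the Lagrangian condition is used, I would also verify the statement through Fulton's excess intersection formula. Along the clean locus $X$ the refined product is $\int_X e(E)$ with $E$ the excess bundle of rank $\dim X$; fiberwise $E_p=T_pM/(T_pL_1+T_pL_2)$. Using the symplectic pairing one has $(T_pL_1+T_pL_2)^\perp=T_pL_1\cap T_pL_2=T_pX$, and $\omega$ identifies $T_pM/(T_pL_1+T_pL_2)$ with $(T_pX)^*$; hence $E\cong T^*X$ and $\int_X e(T^*X)=(-1)^{\dim X}\int_X e(TX)=(-1)^{\dim X}\chi(X)$ by Gauss--Bonnet, reproducing the same answer. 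The main obstacle is precisely this identification $E\cong T^*X$ (equivalently the constancy $\nu_X=(-1)^{\dim X}$): it is the step where the Lagrangian hypothesis, rather than mere half-dimensionality, forces the Euler characteristic to appear.

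Finally, for the second formula I would check that ordinary transversality $S_1\pitchfork S_2$ in $\mathbb{P}^n$ induces a clean intersection of the conormal Lagrangians. At $p\in S_1\cap S_2$ one has $T_{(p,0)}C_{S_i}=T_pS_i\oplus N^*_{S_i,p}$, and since $T_pS_1+T_pS_2=T_p\mathbb{P}^n$ the conormal fibers meet only in $0$; thus $C_{S_1}\cap C_{S_2}$ is the zero section over $S_1\cap S_2$, is smooth, and meets cleanly with $T_{(p,0)}(C_{S_1}\cap C_{S_2})=T_p(S_1\cap S_2)$. In particular $C_{S_1}\cap C_{S_2}\cong S_1\cap S_2$ with the same dimension, so the first part specializes to $C_{S_1}\cdot C_{S_2}=(-1)^{\dim(S_1\cap S_2)}\chi(S_1\cap S_2)$.
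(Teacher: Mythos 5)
Your proposal is correct, and its second half is precisely the paper's own proof: the paper likewise uses transversality/cleanness to reduce to smooth Lagrangians near the compact intersection, invokes Fulton's excess intersection formula $L_1\cdot L_2=c_{top}(E)\cap[L_1\cap L_2]$, and identifies $E\cong T^{*}(L_1\cap L_2)$ --- there by observing that the holomorphic symplectic form makes the four-term excess sequence isomorphic to its dual, which is just the global form of your fiberwise computation that $\omega$ identifies $T_pM/(T_pL_1+T_pL_2)$ with $(T_pL_1\cap T_pL_2)^{*}=(T_pX)^{*}$. What you add is a genuinely different primary route through Section 2.2: $L_1\cdot L_2=\deg[X,E^{\bullet}]^{vir}=\chi(X,\nu_X)$ combined with the fact that $\nu_X\equiv(-1)^{\dim X}$ on a smooth scheme. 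That route requires heavier input --- Behrend's theorem (properness of $X$, guaranteed here) and the Behrend--Fantechi identification of the refined product with the virtual class, which the paper only quotes in the projective hyper-K\"ahler setting --- whereas the excess-bundle argument is self-contained within \cite{fulton} and works verbatim in any holomorphic symplectic $M$; in return, the Behrend route makes transparent that the answer depends only on the scheme structure of $X$. Two points you treat more carefully than the paper, and rightly so: first, reading ``transversal'' as clean --- honest transversality of two Lagrangians would force $\dim X=0$, so cleanness is the only interpretation under which the statement has content; note that cleanness also forces the scheme-theoretic intersection to be smooth (its Zariski tangent space $T_pL_1\cap T_pL_2$ has dimension $\dim_p X$, hence regularity), which your Behrend-function step implicitly needs. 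Second, the verification that $S_1\pitchfork S_2$ makes $C_{S_1}\cap C_{S_2}$ equal to the zero section over $S_1\cap S_2$ and clean there, which the paper asserts without proof inside Theorem \ref{plucker formula 2}.
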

\begin{proof}
As $L_{1}\pitchfork L_{2}$, there exists smooth neighbourhoods of $L_{1}\cap L_{2}$ inside both $L_{i}$, $i=1,2$ and we can assume
$L_{1}$, $L_{2}$ are smooth without loss of generality. By the standard excess intersection theory (see \cite{bf2}, \cite{fulton}),
\begin{equation}L_{1}\cdot L_{2}=c_{top}(E)\cap [L_{1}\cap L_{2}], \nonumber \end{equation}
where $E$ is the excess bundle which fits into the exact sequence
\begin{equation}0\rightarrow T(L_{1}\cap L_{2})\rightarrow TL_{1}|_{L_{1}\cap L_{2}}\oplus TL_{2}|_{L_{1}\cap L_{2}}\rightarrow TM|_{L_{1}\cap L_{2}}\rightarrow E\rightarrow 0.     \nonumber \end{equation}
The holomorphic symplectic form on $M$ induces an isomorphism of this sequence to its dual. Thus $E\cong T^{*}(L_{1}\cap L_{2})$.
\end{proof}
The intersection number $(C_{S_{i}}\cdot \mathbb{P}^{n})$ in Theorem \ref{plucker formula 2} is also an intrinsic invariant of $S_{i}$, $i=1,2$.
\begin{proposition}\label{prop 1.2}(MacPherson \cite{mac}, Behrend \cite{behrend}) \\
Let $S$ be a closed irreducible subvariety of $\mathbb{P}^{n}$. We denote $c_{0}^{M}(S)\triangleq(-1)^{dimS}(C_{S}\cdot \mathbb{P}^{n})$. \\
Then it is the degree zero Chern-Mather class of $S$.
Furthermore,
\begin{equation}c_{0}^{M}(S)=\chi(S,Eu([S])), \nonumber \end{equation}
where $Eu$ is the Euler obstruction of $S$ and $\chi(S,Eu([S]))\triangleq \sum_{n\in\mathbb{Z}}n\chi\{Eu([S])=n\}$ is the weighted Euler characteristic with respect to the integer-valued constructible function $Eu([S])$.
\end{proposition}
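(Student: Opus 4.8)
The plan is to prove the two assertions in the logically cleanest order: first I would identify the common value $(-1)^{\dim S}(C_S\cdot\mathbb{P}^n)=\chi(S,Eu([S]))$ by microlocal index theory, and then invoke MacPherson's theorem to recognise this number as the degree-zero Chern-Mather class. Throughout, $Eu([S])$ denotes the local Euler obstruction regarded as an integer-valued constructible function on $\mathbb{P}^n$, supported on $S$, with value $1$ at the smooth points of $S$ and $0$ off $S$. The first key input is the microlocal characterisation of the Euler obstruction: in the standard conventions its characteristic cycle in $T^*\mathbb{P}^n$ is the conormal variety, $CC(Eu([S]))=(-1)^{\dim S}[C_S]$, where $C_S=\overline{\mathcal{N}^*_{S_{sm}/\mathbb{P}^n}}$ (Gonz\'alez-Sprinberg, Sabbah, Kennedy).

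Next I would apply the Kashiwara--Dubson microlocal index formula. For a constructible function $\alpha$ on the compact complex manifold $\mathbb{P}^n$ it expresses the weighted Euler characteristic as the intersection of the characteristic cycle with the zero section $T^*_{\mathbb{P}^n}\mathbb{P}^n\cong\mathbb{P}^n$,
\begin{equation}
\chi(\mathbb{P}^n,\alpha)=[CC(\alpha)]\cdot[\mathbb{P}^n].
\nonumber
\end{equation}
Taking $\alpha=Eu([S])$ and substituting the cycle identification above yields $\chi(\mathbb{P}^n,Eu([S]))=(-1)^{\dim S}(C_S\cdot\mathbb{P}^n)$. Since $Eu([S])$ vanishes outside $S$, the left-hand side equals $\chi(S,Eu([S]))$, which is exactly the second displayed equality of the proposition.

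For the Chern-Mather identification I would use MacPherson's theorem directly: the Euler obstruction is the constructible function whose image under MacPherson's natural transformation $c_*$ is the Chern-Mather class, $c_*(Eu([S]))=c^M(S)$. Pushing forward along $S\to\mathrm{pt}$ and using naturality of $c_*$ gives $\deg c_*(\alpha)=\chi(S,\alpha)$ for any constructible $\alpha$; with $\alpha=Eu([S])$ this shows that the degree-zero component of $c^M(S)$ has degree $\chi(S,Eu([S]))$. Combined with the previous paragraph, this identifies $(-1)^{\dim S}(C_S\cdot\mathbb{P}^n)$ with the degree-zero Chern-Mather class and completes both claims.

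The hard part will be reconciling the algebraic intersection number $C_S\cdot\mathbb{P}^n$ of Fulton's intersection theory with the microlocal intersection number in the index formula. The Lagrangian cycle $C_S$ and the zero section meet along all of $S$, hence highly non-transversally as soon as $\dim S>0$, so $C_S\cdot\mathbb{P}^n$ must be computed by the excess-intersection construction (normal cone and refined Gysin map). One must verify that the local multiplicities along the strata of $S$ coincide with the values of $Eu([S])$ --- this is precisely the L\^e--Teissier characterisation of the Euler obstruction as a local intersection multiplicity of the conormal variety with the zero section --- and that the global sign is $(-1)^{\dim S}$. As in the proof of Proposition \ref{prop 1.1}, this sign is forced by the holomorphic symplectic form on $T^*\mathbb{P}^n$, which makes the excess bundle self-dual and so identifies it with the cotangent bundle of the intersection; at smooth points of $S$ this already recovers the expected contribution $(-1)^{\dim S}$ with weight $Eu([S])=1$.
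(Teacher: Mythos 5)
Your argument is correct and is essentially the paper's own approach: the paper states this proposition without proof, attributing it to MacPherson and Behrend, and your chain --- the Kennedy/Sabbah identification $CC(Eu([S]))=(-1)^{\dim S}[C_S]$, the Dubson--Kashiwara index formula, and MacPherson's $c_*(Eu([S]))=c^{M}(S)$ together with naturality under pushforward to a point --- is precisely the standard argument those two citations encode. Your sign conventions are mutually consistent (as your smooth-case check recovering $c_0^{M}(S)=\chi(S)$ confirms, matching the paper's remark), and the reconciliation of Fulton's refined intersection number with the microlocal index that you flag as ``the hard part'' is exactly what Behrend's algebraic formulation of the index theorem (for conic Lagrangian cycles and their local Euler obstructions) supplies, so no genuine gap remains.
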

\begin{remark}
If $S$ is smooth, $Eu(S)=1$ and $c_{0}^{M}(S)=\chi(S)$.
\end{remark}
The following example shows the Pl\"{u}cker type formula for dual varieties is nontrivial.
\begin{example}
Let $S_{1}\cong S_{2}\cong \mathbb{P}^{1}$ be two transversal intersecting lines in $\mathbb{P}^{2}$, and $S_{1}^{\vee}$, $S_{2}^{\vee}$ are two different points in $(\mathbb{P}^{2})^{*}$. The Pl\"{u}cker type formula in Theorem \ref{plucker formula 2} gives
\begin{equation}1-\frac{4}{3}=S_{1}\cap S_{2}+\frac{\chi(\mathbb{P}^{1})\cdot\chi(\mathbb{P}^{1})}{-3}=0+\frac{\chi(pt)\cdot \chi(pt)}{-3}=-\frac{1}{3},  \nonumber \end{equation}
which shows $C_{S_{1}}\cdot C_{S_{2}}\neq C_{S_{1}}^{\vee}\cdot C_{S_{2}}^{\vee}$ in general.
\end{example}
By applying Theorem \ref{plucker formula 2} to the case when $S_{1}=S$ and $S_{2}=\mathbb{P}^{n-k-1}$ with $0\leq k\leq codim(S^{\vee})-1$, we can determine Chern-Mather classes of singular varieties.
\begin{corollary}\label{chern-mather class}
Let $S\subseteq \mathbb{P}^{n}$ be a closed irreducible subvariety such that there exists a linear subspace $\mathbb{P}^{n-k-1}\subseteq\mathbb{P}^{n}$ with
$\mathbb{P}^{n-k-1}\pitchfork S$ and $0\leq k\leq codim(S^{\vee})-1$, then
\begin{equation}c_{0}^{M}(S^{\vee})=(-1)^{dimS+dimS^{\vee}+n+1}\big(\frac{n-k}{k+1}c_{0}^{M}(S)-\frac{n+1}{k+1}\chi(S\cap\mathbb{P}^{n-k-1})\big)    \nonumber \end{equation}
\end{corollary}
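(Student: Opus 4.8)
The plan is to specialize Theorem~\ref{plucker formula 2} to the pair $S_1 = S$ and $S_2 = \mathbb{P}^{n-k-1}$, where $\mathbb{P}^{n-k-1}$ is chosen generic (so that it is transversal to $S$, which the hypothesis guarantees is possible, and so that the invariant $\chi(S\cap\mathbb{P}^{n-k-1})$ appearing in the statement is the one attached to a generic linear section). The first thing I would record is that the projective dual of a linear subspace is again linear: the hyperplanes containing $\mathbb{P}^{n-k-1}\subseteq\mathbb{P}^n$ form a $\mathbb{P}^k\subseteq(\mathbb{P}^n)^*$, so $S_2^\vee=\mathbb{P}^k$. To legitimately apply Theorem~\ref{plucker formula 2} I must verify both transversality conditions, namely $S\pitchfork\mathbb{P}^{n-k-1}$ (given) and $S^\vee\pitchfork\mathbb{P}^k$ (holding for a generic choice).

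The crucial point, and where the hypothesis $0\le k\le\mathrm{codim}(S^\vee)-1$ is used, is the vanishing of the dual cross term $C_{S^\vee}\cdot C_{\mathbb{P}^k}$. Since $\dim S^\vee + k < \dim S^\vee + \mathrm{codim}(S^\vee) = n$, a generic $\mathbb{P}^k$ is disjoint from $S^\vee$, hence $S^\vee\cap\mathbb{P}^k=\emptyset$ and Proposition~\ref{prop 1.1} yields $C_{S^\vee}\cdot C_{\mathbb{P}^k}=0$. This collapses the right-hand side of the Pl\"ucker identity to the single term carrying $c_0^M(S^\vee)$, which is precisely what allows me to solve for that quantity.

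Next I would evaluate the four surviving intersection numbers. By Proposition~\ref{prop 1.1} and transversality, $C_S\cdot C_{\mathbb{P}^{n-k-1}}=(-1)^{\dim S-k-1}\chi(S\cap\mathbb{P}^{n-k-1})$, using $\dim(S\cap\mathbb{P}^{n-k-1})=\dim S-k-1$. By Proposition~\ref{prop 1.2}, $C_S\cdot\mathbb{P}^n=(-1)^{\dim S}c_0^M(S)$ and $C_{S^\vee}\cdot(\mathbb{P}^n)^*=(-1)^{\dim S^\vee}c_0^M(S^\vee)$; for the two smooth linear pieces one has $c_0^M(\mathbb{P}^{n-k-1})=\chi(\mathbb{P}^{n-k-1})=n-k$ and $c_0^M(\mathbb{P}^k)=\chi(\mathbb{P}^k)=k+1$, so $C_{\mathbb{P}^{n-k-1}}\cdot\mathbb{P}^n=(-1)^{n-k-1}(n-k)$ and $C_{\mathbb{P}^k}\cdot(\mathbb{P}^n)^*=(-1)^k(k+1)$.

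Finally I would substitute everything into Theorem~\ref{plucker formula 2}, factor out the common sign, and solve the resulting linear equation for $c_0^M(S^\vee)$; the factor $\tfrac{k+1}{n+1}$ sitting in front of $c_0^M(S^\vee)$ is what produces the prefactor $\tfrac{n+1}{k+1}$ in the Corollary. I expect the only genuinely fiddly part to be the sign bookkeeping: I must collect the parities $(-1)^{\dim S}$, $(-1)^{\dim S^\vee}$, $(-1)^{n-k-1}$, $(-1)^k$ together with the global $(-1)^{n+1}$ from the two denominators, and check that they amalgamate into the single overall sign $(-1)^{\dim S+\dim S^\vee+n+1}$, repeatedly invoking $(-1)^{2n}=1$ (equivalently $(-1)^{-n}=(-1)^n$). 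This sign reconciliation, rather than any geometric input, should be the main (though entirely routine) obstacle; all of the conceptual content lies in the identification $(\mathbb{P}^{n-k-1})^\vee=\mathbb{P}^k$ and in the dimension-forced vanishing $S^\vee\cap\mathbb{P}^k=\emptyset$.
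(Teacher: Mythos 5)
Your proposal is correct and takes essentially the same approach as the paper: the paper's entire proof is the observation that for a generic $\mathbb{P}^{n-k-1}$ the dual $(\mathbb{P}^{n-k-1})^{*}=\mathbb{P}^{k}$ is transversal to and, by the dimension count forced by $k\le \mathrm{codim}(S^{\vee})-1$, disjoint from $S^{\vee}$, after which Theorem \ref{plucker formula 2} together with Propositions \ref{prop 1.1} and \ref{prop 1.2} yields the formula. Your write-up just makes explicit the evaluations of the four intersection numbers and the sign bookkeeping that the paper leaves to the reader.
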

\begin{proof}
Note that for generic $\mathbb{P}^{n-k-1}\subseteq\mathbb{P}^{n}$, $(\mathbb{P}^{n-k-1})^{*}\pitchfork S^{\vee}$ and $(\mathbb{P}^{n-k-1})^{*}\cap S^{\vee}=\emptyset$.
\end{proof}
In particular, this recovers classical Pl\"{u}cker formulas for dual curves in $\mathbb{P}^{2}$.
\begin{corollary}(Classical Pl\"{u}cker formula) \\
Let $S_{1}\subseteq \mathbb{P}^{2}$ be an irreducible curve with at most node and cusp singularities and the same holds true for the dual curve $S_{1}^{\vee}$ \footnote{This should be true generically, see \cite{gkz}.}. Suppose $S_{2}\cong \mathbb{P}^{1}$ be a line in $\mathbb{P}^{2}$ intersecting $S_{1}$ transversally\footnote{Such $S_{2}$ always exists as $dim(S^{sing}_{1})< codimS_{2}$.}, then
\begin{equation}c_{0}^{M}(S_{1}^{\vee})=3 deg(S_{1})-2c_{0}^{M}(S_{1}), \nonumber \end{equation}
where $c_{0}^{M}(S_{1})\triangleq(-1)^{dim S_{1}}C_{S_{1}}\cdot \mathbb{P}^{n} =-d^{2}+3d+2\delta+3\kappa$, and $\delta$ (resp. $\kappa$) denotes the number of nodes (resp. cusps) in $S_{1}$.

Moreover, the above formula is equivalent to the classical Pl\"{u}cker formula \footnote{See Chapter 2.4 of \cite{gh} or Proposition 2.5 of \cite{gkz}.}
\begin{equation}d^{\vee}=d^{2}-d-2\delta-3\kappa,  \nonumber \end{equation}
where $d^{\vee}$ denotes the degree of the dual curve $S_{1}^{\vee}$.
\end{corollary}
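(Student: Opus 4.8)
The plan is to obtain the first displayed identity as the special case $n=2$, $S=S_1$, $S_2=\mathbb{P}^{n-k-1}$ with $k=0$ of Corollary \ref{chern-mather class}, and then to unwind the two quantities $c_0^M(S_1)$ and $c_0^M(S_1^\vee)$ into the classical numerical invariants $d,\delta,\kappa,d^\vee$.

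First I would check the hypotheses of Corollary \ref{chern-mather class}. Since $S_1^\vee\subseteq(\mathbb{P}^2)^*$ is again a curve, $\mathrm{codim}(S_1^\vee)=1$, so the admissible range $0\le k\le \mathrm{codim}(S_1^\vee)-1$ forces $k=0$, and then $\mathbb{P}^{n-k-1}=\mathbb{P}^1=S_2$ is exactly a line transverse to $S_1$, as required. With $n=2$, $k=0$, $\dim S_1=\dim S_1^\vee=1$, the sign is $(-1)^{\dim S_1+\dim S_1^\vee+n+1}=(-1)^{5}=-1$ and the two coefficients are $\tfrac{n-k}{k+1}=2$ and $\tfrac{n+1}{k+1}=3$. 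Because $S_2$ is a line meeting $S_1$ transversally, $S_1\cap\mathbb{P}^1$ consists of $\deg S_1=d$ reduced points, hence $\chi(S_1\cap\mathbb{P}^1)=d$. Substituting these into Corollary \ref{chern-mather class} gives $c_0^M(S_1^\vee)=-\bigl(2\,c_0^M(S_1)-3d\bigr)=3d-2\,c_0^M(S_1)$, which is the first displayed formula.

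Next I would evaluate $c_0^M(S_1)$ using Proposition \ref{prop 1.2}, which identifies it with the weighted Euler characteristic $\chi\bigl(S_1,Eu([S_1])\bigr)$. For a reduced plane curve the local Euler obstruction at a point equals the multiplicity of the curve there, so $Eu=1$ on the smooth locus and $Eu=2$ at each node and each cusp (both of multiplicity two). Stratifying by the value of $Eu$ gives $\chi\bigl(S_1,Eu([S_1])\bigr)=\chi\bigl((S_1)_{sm}\bigr)+2(\delta+\kappa)=\chi(S_1)+\delta+\kappa$. To compute the topological Euler characteristic $\chi(S_1)$ I pass to the normalization $\widetilde{S}_1$, a smooth curve of genus $g=\binom{d-1}{2}-\delta-\kappa$, so $\chi(\widetilde{S}_1)=2-2g=-d^2+3d+2\delta+2\kappa$; since the normalization is a homeomorphism over each (unibranch) cusp and glues the two branches at each node, $\chi(S_1)=\chi(\widetilde{S}_1)-\delta$. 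Combining, $c_0^M(S_1)=\chi(\widetilde{S}_1)-\delta+\delta+\kappa=-d^2+3d+2\delta+3\kappa$, exactly the asserted value; note that the cusps are responsible for the passage from the coefficient $2$ in $\chi(\widetilde{S}_1)$ to the coefficient $3$.

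Finally, for the equivalence with the classical statement I would relate $c_0^M(S_1^\vee)$ to the class $d^\vee=\deg S_1^\vee$. Substituting the value of $c_0^M(S_1)$ into the first formula gives $c_0^M(S_1^\vee)=3d-2(-d^2+3d+2\delta+3\kappa)=2d^2-3d-4\delta-6\kappa$. On the other hand, the degree formula for dual varieties (Corollary \ref{deg of dual var}), whose curve case reads $\deg S^\vee=2\deg S-c_0^M(S)$, applied to the pair $\bigl((S_1^\vee)^\vee,S_1^\vee\bigr)=(S_1,S_1^\vee)$ yields $d=2d^\vee-c_0^M(S_1^\vee)$, i.e. $c_0^M(S_1^\vee)=2d^\vee-d$. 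Equating the two expressions for $c_0^M(S_1^\vee)$ and dividing by $2$ produces $d^\vee=d^2-d-2\delta-3\kappa$, and every step is reversible, so the two formulas are indeed equivalent. The main obstacle is the Euler-obstruction computation of the middle paragraph: one must take care that the degree-zero Chern--Mather contribution of a cusp is $2$ rather than what a naive normalization count suggests, since this is precisely what yields the cuspidal coefficient $3$ (and hence the departure from the purely nodal case) in the final Pl\"ucker relation.
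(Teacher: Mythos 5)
Your proposal is correct, and its first half coincides with the paper's own argument: the identity $c_{0}^{M}(S_{1}^{\vee})=3\deg(S_{1})-2c_{0}^{M}(S_{1})$ is exactly Corollary \ref{chern-mather class} with $n=2$, $k=0$ and $\chi(S_{1}\cap\mathbb{P}^{1})=d$. You diverge in two places, both worthwhile. First, you actually verify the asserted evaluation $c_{0}^{M}(S_{1})=-d^{2}+3d+2\delta+3\kappa$ via Proposition \ref{prop 1.2}, using that the local Euler obstruction of a curve equals its multiplicity (hence $2$ at nodes and at cusps) together with the normalization and the genus formula; the paper states this value without proof, so your computation fills in a step the paper leaves implicit. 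Second, for the equivalence with $d^{\vee}=d^{2}-d-2\delta-3\kappa$, the paper applies the same Chern--Mather relation a second time with the roles of $S_{1}$ and $S_{1}^{\vee}$ exchanged, writing $c_{0}^{M}(S_{1}^{\vee})=-(d^{\vee})^{2}+3d^{\vee}+2\delta^{\vee}+3\kappa^{\vee}$ (this is where the node-and-cusp hypothesis on the dual curve is used), and then eliminates $(d^{\vee})^{2}$, $\delta^{\vee}$, $\kappa^{\vee}$ between the two quadratic relations, the first of which is (\ref{equality 1}); you instead invoke Corollary \ref{deg of dual var} in the dual direction, via biduality $(S_{1}^{\vee})^{\vee}=S_{1}$, to get the linear relation $c_{0}^{M}(S_{1}^{\vee})=2d^{\vee}-d$ and equate. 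Your route is shorter and never needs the invariants $\delta^{\vee},\kappa^{\vee}$ of the dual curve (only that a generic line of $(\mathbb{P}^{2})^{*}$ is transverse to it) --- indeed the curve case of Corollary \ref{deg of dual var}, $d^{\vee}=2d-c_{0}^{M}(S_{1})$, already gives the classical formula in one line --- but it imports a second corollary and reflexivity. The paper's elimination stays entirely within Corollary \ref{chern-mather class} and, as a by-product, produces equation (\ref{equality 1}), which the subsequent remark reuses (together with the invariance of geometric genus) to derive the second Pl\"{u}cker formula $\kappa^{\vee}=3d(d-2)-6\delta-8\kappa$; that by-product is lost in your approach. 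Both arguments are complete, and the converse direction is equally routine in each.
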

\begin{proof}
From Corollary \ref{chern-mather class}, $c_{0}^{M}(S_{1}^{\vee})=3deg(S_{1})-2c_{0}^{M}(S_{1})$, then we have
\begin{equation}\label{equality 1}2d^{2}-3d-4\delta-6\kappa=-(d^{\vee})^{2}+3d^{\vee}+2\delta^{\vee}+3\kappa^{\vee},  \end{equation}
\begin{equation}-d^{2}+3d+2\delta+3\kappa=2(d^{\vee})^{2}-3d^{\vee}-4\delta^{\vee}-6\kappa^{\vee}.  \nonumber \end{equation}
Eliminating $(d^{\vee})^{2}$ terms, we obtain $d^{\vee}=d^{2}-d-2\delta-3\kappa$. The converse part is similar.
\end{proof}
\begin{remark}
From the invariance of geometric genus for dual curves, we obtain
\begin{equation}\frac{1}{2}(d-1)(d-2)-\delta-\kappa=\frac{1}{2}(d^{\vee}-1)(d^{\vee}-2)-\delta^{\vee}-\kappa^{\vee}. \nonumber \end{equation}
Combined with (\ref{equality 1}), we get the second Pl\"{u}cker formula
\begin{equation}\kappa^{\vee}=3d(d-2)-6\delta-8\kappa \nonumber \end{equation}
for plane curves (see for instance, Proposition 2.5 \cite{gkz}).
\end{remark}
When $S^{\vee}\cap (\mathbb{P}^{n-k-1})^{*}\neq\emptyset$ in Corollary \ref{chern-mather class}, we have the following example.
\begin{example}(Chern-Mather class of Beauville-Donagi's Pfaffian hypersurface)${}$ \\
Let $X^{13}\subseteq \mathbb{P}^{14}$ be the Pfaffian hypersurface of degree $3$ in Beauville-Donagi \cite{beadonagi},
$\mathbb{P}^{5}\subseteq \mathbb{P}^{14}$ be a generic linear subspace which intersects $X^{13}$ transversally\footnote{The intersection happens inside the smooth loci of $X^{13}$.} along a smooth cubic 4-fold.
Inside the dual space $(\mathbb{P}^{14})^{*}$, we have $(X^{13})^{\vee}\cong Gr(2,6)$, $(\mathbb{P}^{5})^{\vee}\cong \mathbb{P}^{8}$, which intersect transversally along a K3 surface $S=(X^{13})^{\vee}\cap(\mathbb{P}^{5})^{\vee}$. The Pl\"{u}cker type formula in Theorem \ref{plucker formula 2} gives
\begin{equation} \chi(X^{13}\cap\mathbb{P}^{5})+\frac{c_{0}^{M}(X^{13})\cdot \chi(\mathbb{P}^{5})}{-15}=\chi(S)+\frac{\chi(Gr(2,6))\cdot \chi(\mathbb{P}^{8})}{-15},   \nonumber \end{equation}
i.e. $c_{0}^{M}(X^{13})=30$.
\end{example}
By applying Theorem \ref{plucker formula 2} to the case when $S_{1}=S$ and $S_{2}=\mathbb{P}^{n-codim(S^{\vee})-1}$, one can determine the degree of dual varieties, which recovers Ernstr\"{o}m's generalized Pl\"{u}cker formulas (see also \cite{ern1, ern2, kleiman, matsui}).
\begin{corollary}\label{deg of dual var}
Let $S\subseteq \mathbb{P}^{n}$ be a closed irreducible subvariety such that there exists a linear subspace $\mathbb{P}^{n-k-1}\subseteq\mathbb{P}^{n}$ with
$\mathbb{P}^{n-k-1}\pitchfork S$ and $0\leq k\leq codim(S^{\vee})-1$, then
\begin{equation}deg(S^{\vee})=(-1)^{dimS+l+1}\bigg(\frac{l-k}{k+1}c_{0}^{M}(S)+\chi(S\cap \mathbb{P}^{n-l-1})-\frac{l+1}{k+1}\chi(S\cap\mathbb{P}^{n-k-1})\bigg),  \nonumber \end{equation}
where $l=codimS^{\vee}$.
\end{corollary}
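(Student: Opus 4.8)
The plan is to apply Theorem \ref{plucker formula 2} to the pair $S_{1}=S$ and $S_{2}=\mathbb{P}^{n-l-1}$ (with $l=\mathrm{codim}\,S^{\vee}$), then eliminate the unknown $c_{0}^{M}(S^{\vee})$ using Corollary \ref{chern-mather class}, and finally solve the resulting linear relation for $\deg(S^{\vee})$. A generic $\mathbb{P}^{n-l-1}$ meets $S$ transversally, and its projective dual is the linear subspace $(\mathbb{P}^{n-l-1})^{\vee}=\mathbb{P}^{l}\subseteq(\mathbb{P}^{n})^{*}$ of dimension complementary to $S^{\vee}$; since $\dim S^{\vee}=n-l$, a generic such $\mathbb{P}^{l}$ meets $S^{\vee}$ transversally in exactly $\deg(S^{\vee})$ reduced points. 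This is the mechanism by which the degree enters the formula, through $C_{S^{\vee}}\cdot C_{\mathbb{P}^{l}}$.

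First I would translate every term of Theorem \ref{plucker formula 2} into intrinsic invariants of $S$ and $S^{\vee}$. By Proposition \ref{prop 1.1}, $C_{S}\cdot C_{\mathbb{P}^{n-l-1}}=(-1)^{\dim S-l-1}\chi(S\cap\mathbb{P}^{n-l-1})$ and $C_{S^{\vee}}\cdot C_{\mathbb{P}^{l}}=(-1)^{0}\chi(S^{\vee}\cap\mathbb{P}^{l})=\deg(S^{\vee})$, using that the transversal intersections have dimensions $\dim S-l-1$ and $0$ respectively. By Proposition \ref{prop 1.2}, $C_{S}\cdot\mathbb{P}^{n}=(-1)^{\dim S}c_{0}^{M}(S)$ and $C_{S^{\vee}}\cdot(\mathbb{P}^{n})^{*}=(-1)^{n-l}c_{0}^{M}(S^{\vee})$, while for the smooth linear subspaces $c_{0}^{M}=\chi$ gives $C_{\mathbb{P}^{n-l-1}}\cdot\mathbb{P}^{n}=(-1)^{n-l-1}(n-l)$ and $C_{\mathbb{P}^{l}}\cdot(\mathbb{P}^{n})^{*}=(-1)^{l}(l+1)$. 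Substituting these into the identity of Theorem \ref{plucker formula 2} and collecting signs yields a single linear relation among $\deg(S^{\vee})$, $c_{0}^{M}(S)$, $c_{0}^{M}(S^{\vee})$ and $\chi(S\cap\mathbb{P}^{n-l-1})$.

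Next I would invoke Corollary \ref{chern-mather class}, which (with $\dim S^{\vee}=n-l$) expresses $c_{0}^{M}(S^{\vee})$ in terms of $c_{0}^{M}(S)$ and $\chi(S\cap\mathbb{P}^{n-k-1})$; this is where the second parameter $k$ enters, and the hypothesis $0\leq k\leq l-1$ is precisely what is needed for that corollary to apply, so that a generic $(\mathbb{P}^{n-k-1})^{*}$ misses $S^{\vee}$. Substituting this expression and factoring out the common sign $(-1)^{\dim S+l+1}$ reduces the statement to checking that the coefficient of $c_{0}^{M}(S)$ simplifies correctly: the numerator $-(n-l)(k+1)+(l+1)(n-k)$ collapses to $(n+1)(l-k)$, so that after dividing by $(n+1)(k+1)$ one obtains exactly the coefficient $\tfrac{l-k}{k+1}$, while the $\chi(S\cap\mathbb{P}^{n-k-1})$ term picks up coefficient $-\tfrac{l+1}{k+1}$.

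I expect the main obstacle to be not the algebra but the careful verification of the transversality and genericity hypotheses needed to run both Theorem \ref{plucker formula 2} and Corollary \ref{chern-mather class} with the same $S$: one must choose the auxiliary subspaces $\mathbb{P}^{n-l-1}$ and $\mathbb{P}^{n-k-1}$ so that all four intersections ($S$ with each, and $S^{\vee}$ with the duals $\mathbb{P}^{l}$ and $(\mathbb{P}^{n-k-1})^{*}$) are simultaneously transversal and of the expected dimension, which follows from Kleiman transversality for a generic choice. The remaining effort is routine sign bookkeeping, which becomes immediate once the reduction $-(n-l)(k+1)+(l+1)(n-k)=(n+1)(l-k)$ is recorded.
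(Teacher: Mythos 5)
Your proof is correct and is essentially the paper's own argument: the paper applies Theorem \ref{plucker formula 2} to both pairs $(S,\mathbb{P}^{n-k-1})$ and $(S,\mathbb{P}^{n-l-1})$ and eliminates the $C_{S^{\vee}}\cdot(\mathbb{P}^{n})^{*}$ terms, and your substitution of Corollary \ref{chern-mather class} is exactly that first application in packaged form, so the elimination is the same. Your sign bookkeeping and the key identity $-(n-l)(k+1)+(l+1)(n-k)=(n+1)(l-k)$ check out.
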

\begin{proof}
For generic $\mathbb{P}^{n-k-1}\subseteq\mathbb{P}^{n}$ with $0\leq k\leq codim(S^{\vee})$, $(\mathbb{P}^{n-k-1})^{*}\pitchfork S^{\vee}$,
and the assumption ensures that there exists $\mathbb{P}^{n-codim(S^{\vee})-1}\subseteq\mathbb{P}^{n}$ with $\mathbb{P}^{n-codim(S^{\vee})-1}\pitchfork S$, then we apply Theorem \ref{plucker formula 2} to $(S_{1}=S,S_{2}=\mathbb{P}^{n-k-1})$
and $(S_{1}=S,S_{2}=\mathbb{P}^{n-codim(S^{\vee})-1})$ individually. Combining them and eliminating terms with $C_{S^{\vee}}\cdot (\mathbb{P}^{n})^{*}$, we obtain the formula.
\end{proof}
Combining Corollary \ref{chern-mather class}, \ref{deg of dual var}, we have the following criterion to detect dimensions of dual varieties.
\begin{corollary}\label{dim of dual var}
Let $S\subseteq \mathbb{P}^{n}$ be a closed irreducible subvariety such that there exists a linear subspace $\mathbb{P}^{n-1}\subseteq\mathbb{P}^{n}$ with $\mathbb{P}^{n-1}\pitchfork S$, then for any $0\leq k\leq codim(S^{\vee})-1$,
\begin{equation}kc_{0}^{M}(S)=(k+1)\chi(S\cap\mathbb{P}^{n-1})-\chi(S\cap\mathbb{P}^{n-k-1}),   \nonumber \end{equation}
which becomes a strict inequality exactly when $k=codim(S^{\vee})$.
\end{corollary}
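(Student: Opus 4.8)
The plan is to read the stated identity as the assertion that the quantity $c_0^M(S^\vee)$, which Corollary \ref{chern-mather class} expresses in terms of the auxiliary integer $k$, does not actually depend on $k$. To make this transparent I would introduce
\[
P(k)\triangleq (n-k)\,c_0^M(S)-(n+1)\,\chi(S\cap\mathbb{P}^{n-k-1}),
\]
so that Corollary \ref{chern-mather class} reads $P(k)=\epsilon\,(k+1)\,c_0^M(S^\vee)$ for $0\leq k\leq codim(S^\vee)-1$, where $\epsilon=(-1)^{\dim S+\dim S^\vee+n+1}$. The first step is the purely algebraic identity
\[
(n+1)\Big(k\,c_0^M(S)-(k+1)\chi(S\cap\mathbb{P}^{n-1})+\chi(S\cap\mathbb{P}^{n-k-1})\Big)=(k+1)P(0)-P(k),
\]
which is a one-line rearrangement. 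Substituting $P(0)=\epsilon\,c_0^M(S^\vee)$ and $P(k)=\epsilon\,(k+1)\,c_0^M(S^\vee)$ makes the right-hand side vanish, and dividing by $n+1$ yields the displayed equality for every $k$ with $0\leq k\leq codim(S^\vee)-1$.

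For the last clause I would go back to the source of Corollary \ref{chern-mather class}, namely Theorem \ref{plucker formula 2} applied to $S_1=S$ and $S_2=\mathbb{P}^{n-k-1}$, whose projective dual is the linear space $(\mathbb{P}^{n-k-1})^*\cong\mathbb{P}^{k}\subseteq(\mathbb{P}^n)^*$. The whole argument hinges on a dimension count in $(\mathbb{P}^n)^*$: a generic $\mathbb{P}^{k}$ meets the codimension-$codim(S^\vee)$ variety $S^\vee$ in expected dimension $k-codim(S^\vee)$. For $k<codim(S^\vee)$ this intersection is empty, so the dual-side intersection term $C_{S^\vee}\cdot C_{\mathbb{P}^{k}}$ of Theorem \ref{plucker formula 2} vanishes; this vanishing is precisely what validates Corollary \ref{chern-mather class}, hence the equality just proved. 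At the threshold $k=codim(S^\vee)=:l$, a generic $\mathbb{P}^{l}$ meets $S^\vee$ transversally in a $0$-dimensional set of $deg(S^\vee)$ points, and by Proposition \ref{prop 1.1} this contributes $C_{S^\vee}\cdot C_{\mathbb{P}^{l}}=(-1)^{0}\chi(S^\vee\cap\mathbb{P}^{l})=deg(S^\vee)$ to Theorem \ref{plucker formula 2}. Carrying this extra term through the same bookkeeping replaces the value of $P(l)$ by $\epsilon\,(l+1)\,c_0^M(S^\vee)-\epsilon\,(n+1)\,deg(S^\vee)$, so that
\[
\Big(k\,c_0^M(S)-(k+1)\chi(S\cap\mathbb{P}^{n-1})+\chi(S\cap\mathbb{P}^{n-k-1})\Big)\Big|_{k=l}=\epsilon\,deg(S^\vee).
\]
Since $S^\vee$ is a nonempty variety we have $deg(S^\vee)>0$, so the two sides of the displayed formula differ by the definite nonzero amount $\pm deg(S^\vee)$; this is the asserted strict inequality, occurring for the first time exactly at $k=codim(S^\vee)$.

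The genuinely routine part is the algebra; the two places demanding care are the sign bookkeeping and the genericity. I expect the main obstacle to be tracking the $(-1)$-powers coming from $\dim S$, $\dim S^\vee$ and $n$ through Proposition \ref{prop 1.1} and Theorem \ref{plucker formula 2} carefully enough to confirm that the defect at $k=l$ is exactly $\epsilon\,deg(S^\vee)$ rather than some other multiple, and in particular that it is nonzero with a well-defined sign. A secondary point is to arrange the linear subspaces $\mathbb{P}^{n-k-1}$ generically so that all transversality hypotheses of Theorem \ref{plucker formula 2} and Proposition \ref{prop 1.1} hold simultaneously for the values of $k$ used, and to note that the phrase ``strict inequality'' should be read as the failure of the equality by precisely $\pm deg(S^\vee)$.
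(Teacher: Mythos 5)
Your proposal is correct and takes essentially the same route as the paper's (very terse) proof: the equality for $0\leq k\leq codim(S^{\vee})-1$ is obtained by applying the Pl\"{u}cker type formula of Theorem \ref{plucker formula 2} to $(S,\mathbb{P}^{n-1})$ and $(S,\mathbb{P}^{n-k-1})$ --- which is exactly what your combination $(k+1)P(0)-P(k)$ of instances of Corollary \ref{chern-mather class} amounts to --- and then eliminating the $c_{0}^{M}(S^{\vee})$ terms, while the failure of the equality at $k=codim(S^{\vee})$ comes from the now nonvanishing transversal intersection term contributing $deg(S^{\vee})>0$. Your explicit computation of the defect as $\pm\, deg(S^{\vee})$ simply fills in the sign bookkeeping that the paper leaves implicit.
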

\begin{proof}
When $0\leq k\leq codim(S^{\vee})-1$, we apply Theorem \ref{plucker formula 2} to $(S_{1}=S,S_{2}=\mathbb{P}^{n-1})$ and $(S_{1}=S,S_{2}=\mathbb{P}^{n-k-1})$ and eliminate terms with $c_{0}^{M}(S^{\vee})$.
When $k=codim(S^{\vee})$, we get a strict inequality as $deg(S^{\vee})>0$.
\end{proof}
We apply Theorem \ref{plucker formula 2} to the situation when $S_{2}$ is a smooth quadric hypersurface, in which case its dual variety is again a smooth quadric hypersurface.
\begin{corollary}
Let $S\subseteq \mathbb{P}^{n}$ be a closed irreducible subvariety such that there exists a quadric hypersurface $Q$ intersecting $S$ transversally and the same holds true for $Q^{\vee}$, $S^{\vee}$, then
\begin{equation}\chi(S\cap Q)-\big(1-\frac{1+(-1)^{n}}{2(n+1)}\big)c_{0}^{M}(S)=(-1)^{dimS+dimS^{\vee}}\bigg(\chi(S^{\vee}\cap Q^{\vee})-\big(1-\frac{1+(-1)^{n}}{2(n+1)}\big)c_{0}^{M}(S^{\vee}) \bigg)  \nonumber \end{equation}
\end{corollary}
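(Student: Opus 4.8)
The plan is to specialize Theorem \ref{plucker formula 2} to the pair $S_{1}=S$ and $S_{2}=Q$, and then to translate every intersection number that appears into the intrinsic invariants $\chi$ and $c_{0}^{M}$ by means of Propositions \ref{prop 1.1} and \ref{prop 1.2}. Since $Q$ is a smooth quadric hypersurface we have $\dim Q=n-1$, so transversality gives $\dim(S\cap Q)=\dim S-1$, and Proposition \ref{prop 1.1} rewrites $C_{S}\cdot C_{Q}=(-1)^{\dim S-1}\chi(S\cap Q)$; the dual side is handled identically, using $\dim(S^{\vee}\cap Q^{\vee})=\dim S^{\vee}-1$. Proposition \ref{prop 1.2} rewrites the remaining factors as $C_{S}\cdot \mathbb{P}^{n}=(-1)^{\dim S}c_{0}^{M}(S)$ and $C_{S^{\vee}}\cdot(\mathbb{P}^{n})^{*}=(-1)^{\dim S^{\vee}}c_{0}^{M}(S^{\vee})$.

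The one genuinely new input is the evaluation of the ``quadric factor''. Because $Q$ is smooth, $c_{0}^{M}(Q)=\chi(Q)$, and the projective dual of a smooth quadric is again a smooth quadric hypersurface in $(\mathbb{P}^{n})^{*}$ of the same dimension $n-1$, whence $\chi(Q^{\vee})=\chi(Q)$. I would compute the Euler characteristic of a smooth quadric $Q^{n-1}\subseteq\mathbb{P}^{n}$ directly from its Betti numbers, obtaining $\chi(Q)=n+\tfrac{1-(-1)^{n}}{2}$, equal to $n$ for $n$ even and $n+1$ for $n$ odd. Consequently $C_{Q}\cdot\mathbb{P}^{n}=(-1)^{n-1}\chi(Q)$ and $C_{Q^{\vee}}\cdot(\mathbb{P}^{n})^{*}=(-1)^{n-1}\chi(Q)$.

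Substituting these expressions into Theorem \ref{plucker formula 2}, the sign $(-1)^{n-1}$ coming from the quadric factor cancels the $(-1)^{n+1}$ in the denominator, and each side collapses to $-(-1)^{\dim S}\big(\chi(S\cap Q)-\tfrac{\chi(Q)}{n+1}\,c_{0}^{M}(S)\big)$, with the analogous expression on the dual side. The final step is the elementary identity $\tfrac{\chi(Q)}{n+1}=1-\tfrac{1+(-1)^{n}}{2(n+1)}$, which follows at once from the value of $\chi(Q)$. Inserting it, cancelling the common overall sign, and moving the factor $(-1)^{\dim S}$ across the equation (which produces the coefficient $(-1)^{\dim S^{\vee}-\dim S}=(-1)^{\dim S+\dim S^{\vee}}$) yields exactly the claimed formula.

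I do not anticipate a serious obstacle: granted Theorem \ref{plucker formula 2} and the two Propositions, the argument is a single substitution followed by careful bookkeeping of signs. The only delicate point is the parity computation of $\chi(Q)$ and its matching with the stated coefficient $1-\tfrac{1+(-1)^{n}}{2(n+1)}$; a parity or sign slip there is the most likely source of error and is where I would check the calculation most carefully.
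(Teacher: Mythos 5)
Your proof is correct and follows essentially the same route as the paper: specialize Theorem \ref{plucker formula 2} to $(S_{1}=S,\,S_{2}=Q)$, convert every intersection number via Propositions \ref{prop 1.1} and \ref{prop 1.2}, and use $\chi(Q^{\vee})=\chi(Q)$ together with the identity $\frac{\chi(Q)}{n+1}=1-\frac{1+(-1)^{n}}{2(n+1)}$. The only (minor) difference is how $\chi(Q)$ is obtained: you compute it directly from the Betti numbers of a smooth quadric, whereas the paper first applies Theorem \ref{plucker formula 2} to the pair $(S_{1}=\mathbb{P}^{n-1},\,S_{2}=Q)$ to derive $\chi$ of quadric hypersurfaces internally; both give $\chi(Q)=n+\frac{1-(-1)^{n}}{2}$ for $Q\subseteq\mathbb{P}^{n}$, after which the computations coincide.
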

\begin{proof}
We apply Theorem \ref{plucker formula 2} to $(S_{1}=\mathbb{P}^{n-1},S_{2}=Q)$ and obtain $\chi(Q_{n})=n+1+\frac{1+(-1)^{n}}{2}$ for quadric hypersurface $Q_{n}\subseteq \mathbb{P}^{n+1}$, then apply it to $(S_{1}=S,S_{2}=Q)$.
\end{proof}

\section{Appendix on examples of complex Lagrangians \\ from higher dimensional gauge theories}
The purpose of this section is to construct examples of complex Lagrangian subvarieties inside holomorphic symplectic manifolds following Donaldson-Thomas' work on higher dimensional gauge theories and their TQFT structures (see for instance  \cite{dt}, \cite{tyurin}).

We take a smooth anti-canonical divisor $S$ of a complex projective 3-fold $Y$, and consider a moduli space $\mathfrak{M}_{Y}$ of stable holomorphic bundles on $Y$ with fixed Chern classes. To make sense of the restriction morphism,
\begin{equation}r: \mathfrak{M}_{Y}\rightarrow \mathfrak{M}_{S}   \nonumber \end{equation}
to a moduli space $\mathfrak{M}_{S}$ of stable sheaves on $S$, we recall the following criterion.
\begin{theorem}(Flenner \cite{flenner})\label{restriction thm} ${}$ \\
Let $(X,\mathcal{O}_{X}(1))$ be a complex $n$-dimensional normal projective variety with $\mathcal{O}_{X}(1)$ very ample.
We take $\mathcal{F}$ to be a $\mathcal{O}_{X}(1)$-slope semi-stable torsion-free sheaf of rank $r$. $d$ and $1\leq c\leq n-1$ are integers such that
\begin{equation}[\left(\begin{array}{l}n+d \\ \quad d\end{array}\right)-cd-1]/d>\deg(\mathcal{O}_{X}(1))\cdot\max(\frac{r^{2}-1}{4},1).
\nonumber \end{equation}
Then for a generic complete intersection $Y=H_{1}\cap\cdot\cdot\cdot\cap H_{c}$ with $H_{i}\in |\mathcal{O}_{X}(d)|$,
$\mathcal{F}|_{Y}\triangleq \mathcal{F}\otimes_{\mathcal{O}_{X}} \mathcal{O}_{Y}$ is $\mathcal{O}_{X}(1)|_{Y}$-slope semi-stable on $Y$.
\end{theorem}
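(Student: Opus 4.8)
The plan is to obtain the semistability of the restriction from the \emph{generalized Grauert--M\"{u}lich theorem} together with a dimension count for the family of potential destabilizing subsheaves; this is the mechanism underlying all effective restriction theorems, and the numerical hypothesis is exactly the threshold at which the count closes. Throughout I work with $\mu$-semistability relative to $\mathcal{O}_X(1)$ and may assume the ground field algebraically closed. The first move is to reduce the genericity count to a linear system of known dimension: since $\mathcal{O}_X(1)$ is very ample, a generic linear projection of the embedding $X\subseteq\mathbb{P}^N$ gives a finite surjection $\pi:X\to\mathbb{P}^n$, and I restrict attention to hypersurfaces $\pi^{-1}(H)$ with $H\in|\mathcal{O}_{\mathbb{P}^n}(d)|$. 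The space $|\mathcal{O}_{\mathbb{P}^n}(d)|$ has dimension $\binom{n+d}{d}-1$, which is precisely the source of the binomial term in the stated bound.

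First I would check that for a generic choice of the $c$ hypersurfaces the intersection $Y$ is a normal complete intersection and that $\mathcal{F}|_Y$ is torsion-free; both follow from Bertini-type theorems and the normality of $X$, after discarding the lower-dimensional loci where $\mathcal{F}$ or $X$ is singular. The core is then a contradiction argument. Suppose $\mathcal{F}|_Y$ fails to be $\mu$-semistable for $Y$ ranging over a dense subset $B$ of the parameter space. Over a dense open subset of $B$ the maximal destabilizing subsheaf $\mathcal{G}_Y\subset\mathcal{F}|_Y$ is unique, so these assemble into a flat family $\mathcal{G}\hookrightarrow p^{*}\mathcal{F}$ on the incidence variety, where $p$ is the projection to $X$. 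A destabilizer that persists over a positive-dimensional family of such $Y$ spreads out and produces a coherent subsheaf of $\mathcal{F}$ on $X$ whose slope I can estimate.

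The quantitative heart is to bound the slope excess $\mu_{\max}(\mathcal{F}|_Y)-\mu(\mathcal{F}|_Y)$ from above by the generalized Grauert--M\"{u}lich theorem: restricting $\mathcal{F}$ to a generic complete-intersection curve cut out by members of $|\mathcal{O}_X(1)|$, the Harder--Narasimhan slope gaps are bounded by $\deg(\mathcal{O}_X(1))$, and summing over the filtration with the optimization $\max_{\sum r_i=r}\sum_{i<j}r_ir_j=\lfloor r^2/4\rfloor$ yields the factor $\deg(\mathcal{O}_X(1))\cdot\max\!\big(\tfrac{r^2-1}{4},1\big)$. On the other hand, moving the destabilizing subsheaf within the $\binom{n+d}{d}$-dimensional system (the $cd$ and $-1$ corrections accounting for the $c$-fold complete intersection and the projectivization, and the division by $d$ for motion along a pencil) forces the slope of the induced subsheaf of $\mathcal{F}$ on $X$ to exceed $\mu(\mathcal{F})$ whenever the displayed inequality holds. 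This contradicts the $\mu$-semistability of $\mathcal{F}$, so $B$ cannot be dense and a generic $Y$ works.

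I expect the main obstacle to be the bookkeeping in the last paragraph: showing that the relative maximal destabilizing subsheaf spreads out to a genuinely flat family with a well-behaved relative Harder--Narasimhan filtration, and then converting the dimension count of this family into the exact numerical threshold — in particular making the constants $cd$, $-1$, and the division by $d$ come out precisely rather than up to an implicit factor. The remaining ingredients (Bertini, Grauert--M\"{u}lich, and the rank-partition optimization) are standard input.
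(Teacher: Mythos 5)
A preliminary point: the paper itself contains \emph{no} proof of this statement. Theorem \ref{restriction thm} is quoted verbatim as Flenner's restriction theorem \cite{flenner} and used purely as a black box in the appendix, to guarantee that the restriction morphism $r:\mathfrak{M}_{Y}\rightarrow\mathfrak{M}_{S}$ is defined. So your proposal can only be measured against Flenner's actual argument (the modern treatment is Huybrechts--Lehn, \emph{The Geometry of Moduli Spaces of Sheaves}, Theorem 7.1.1), and to your credit the skeleton you describe is that argument: a Noether normalization $\pi:X\rightarrow\mathbb{P}^{n}$ to produce a linear subsystem of $|\mathcal{O}_{X}(d)|$ of dimension $\binom{n+d}{d}-1$, the incidence variety $Z\subseteq X\times S$ with projections $p$ and $q$, the relative Harder--Narasimhan filtration over a dense open subset of the parameter space, and a numerical contradiction with the semistability of $\mathcal{F}$.

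The gaps are in the quantitative heart, and they are genuine. First, the step ``a destabilizer that persists over a positive-dimensional family spreads out and produces a coherent subsheaf of $\mathcal{F}$ on $X$'' is precisely the step that can fail: a subsheaf $\mathcal{G}\subseteq p^{*}\mathcal{F}$ on $Z$ descends along $p$ only if the $\mathcal{O}_{Z}$-linear composite of $\mathcal{G}\hookrightarrow p^{*}\mathcal{F}$ with the canonical relative connection $p^{*}\mathcal{F}\rightarrow p^{*}\mathcal{F}\otimes\Omega_{Z/X}$ and the projection to $(p^{*}\mathcal{F}/\mathcal{G})\otimes\Omega_{Z/X}$ vanishes. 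Flenner's proof is a dichotomy: if this map vanishes, descent contradicts the semistability of $\mathcal{F}$; if it does not, the resulting nonzero map between Harder--Narasimhan factors twisted by $\Omega_{Z/X}$ yields a slope inequality, and it is in estimating the slopes of this twisting sheaf on the fibres of $q$ that the quantity $[\binom{n+d}{d}-cd-1]/d$ enters. You instead treat descent as automatic and outsource the slope bound to a ``generalized Grauert--M\"{u}lich theorem'' for curve sections of $|\mathcal{O}_{X}(1)|$; that is circular (Grauert--M\"{u}lich is itself proved by exactly this descent dichotomy) and, worse, it is the wrong family: applied to $|\mathcal{O}_{X}(1)|$-sections it can never produce the binomial term, so in your write-up the dimension count and the slope estimate sit in disconnected paragraphs and are never coupled in a single inequality. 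Second, the constant is not derived: the identity $\max_{\sum r_{i}=r}\sum_{i<j}r_{i}r_{j}=\lfloor r^{2}/4\rfloor$ is false as stated (over arbitrary partitions the maximum is $\binom{r}{2}$, attained when all $r_{i}=1$; it equals $\lfloor r^{2}/4\rfloor$ only for partitions into two parts), and in any case $\lfloor r^{2}/4\rfloor\neq\frac{r^{2}-1}{4}$ for even $r$ (for $r=4$ one gets $4$ versus $\frac{15}{4}$), so even granting everything else your route establishes the theorem only under a strictly stronger numerical hypothesis than the one stated. Producing $\max\big(\frac{r^{2}-1}{4},1\big)$ on the nose requires the integrality-of-degrees bookkeeping in Flenner's computation --- exactly the part you defer as ``bookkeeping.''
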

\begin{remark}
For $X=\mathbb{P}^{3}$, $\mathcal{O}_{X}(1)=\mathcal{O}_{\mathbb{P}^{3}}(1)$ is very ample. We take $c=1$, $d=5$, then
any semi-stable sheaf on $X$ with $rank\leq5$ remains semi-stable when restricted to a generic quartic $K3$ surface inside $X$.
\end{remark}
Assuming conditions in Theorem \ref{restriction thm} are satisfied, we obtain a morphism
\begin{equation}r: \mathfrak{M}_{Y}\rightarrow \mathfrak{M}_{S}, \nonumber \end{equation}
whose deformation obstruction theory is described by the following exact sequence.
\begin{lemma}\label{def-obs LES}
Let $E\in\mathfrak{M}_{Y}$ be a stable bundle, and $S$ be connected. Then there is a long exact sequence,
\begin{equation}0\rightarrow H^{0,1}(Y,EndE\otimes K_{Y})\rightarrow H^{0,1}(Y,EndE)\rightarrow H^{0,1}(S,EndE|_{S})
\rightarrow  \nonumber \end{equation}
\begin{equation}\rightarrow H^{0,2}(Y,EndE\otimes K_{Y})\rightarrow  H^{0,2}(Y,EndE)\rightarrow 0.\nonumber \end{equation}
\end{lemma}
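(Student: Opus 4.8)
The plan is to realize the asserted six-term sequence as a truncation of the long exact cohomology sequence attached to the structure-sheaf sequence of the anti-canonical divisor $S$. Since $S\in|-K_Y|$, there is a short exact sequence $0\to\mathcal{O}_Y(-S)\to\mathcal{O}_Y\to\mathcal{O}_S\to0$ with $\mathcal{O}_Y(-S)\cong K_Y$. Tensoring by the locally free sheaf $EndE=E\otimes E^{*}$ keeps the sequence exact and yields
\begin{equation}0\to EndE\otimes K_Y\to EndE\to EndE|_S\to0,\nonumber\end{equation}
where I identify $EndE|_S\cong End(E|_S)$ and $H^{0,q}(Y,-)=H^q(Y,-)$ by Dolbeault's theorem. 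The claimed sequence is precisely the block $H^{0,1}\to H^{0,1}\to H^{0,1}(S)\to H^{0,2}\to H^{0,2}$ of the associated long exact sequence, so the interior exactness is automatic and everything reduces to producing the leftmost $0$, i.e. injectivity of $H^{0,1}(Y,EndE\otimes K_Y)\to H^{0,1}(Y,EndE)$, and the rightmost $0$, i.e. surjectivity of $H^{0,2}(Y,EndE\otimes K_Y)\to H^{0,2}(Y,EndE)$.

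For the left end I would feed in two inputs. First, $H^0(Y,EndE\otimes K_Y)=\mathrm{Hom}(E,E\otimes K_Y)=0$: because $S$ is a nonzero effective divisor, $K_Y\cdot H^2=-S\cdot H^2<0$ for the polarization $H$, so $E\otimes K_Y$ is stable of strictly smaller slope than $E$ and no nonzero map between them exists. Second, $E$ is stable, hence simple, so $H^0(Y,EndE)=\mathbb{C}\cdot\mathrm{Id}$, and by the restriction theorem (Theorem \ref{restriction thm}) $E|_S$ is stable, so, $S$ being connected, $H^0(S,EndE|_S)=\mathbb{C}\cdot\mathrm{Id}$. The restriction map sends $\mathrm{Id}\mapsto\mathrm{Id}\neq0$ between two one-dimensional spaces, so (using $H^0(Y,EndE\otimes K_Y)=0$ for injectivity) it is an isomorphism; in particular it is surjective, which forces the connecting map $H^0(S,EndE|_S)\to H^{0,1}(Y,EndE\otimes K_Y)$ to vanish and gives the required injectivity.

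For the right end I would argue by Serre duality rather than repeat the computation. Continuing the long exact sequence past $H^{0,2}$,
\begin{equation}H^{0,2}(Y,EndE)\xrightarrow{\ r\ } H^{0,2}(S,EndE|_S)\xrightarrow{\ \delta\ } H^{0,3}(Y,EndE\otimes K_Y)\to H^{0,3}(Y,EndE)\to0,\nonumber\end{equation}
and surjectivity of $H^{0,2}(Y,EndE\otimes K_Y)\to H^{0,2}(Y,EndE)$ is equivalent to $r=0$. Serre duality on $Y$ together with the self-duality $EndE\cong(EndE)^{*}$ gives $H^{0,3}(Y,EndE)\cong H^0(Y,EndE\otimes K_Y)^{*}=0$ and $H^{0,3}(Y,EndE\otimes K_Y)\cong H^0(Y,EndE)^{*}\cong\mathbb{C}$, while Serre duality on $S$ with $K_S\cong\mathcal{O}_S$ (adjunction, since $S$ is anti-canonical) gives $H^{0,2}(S,EndE|_S)\cong H^0(S,EndE|_S)^{*}\cong\mathbb{C}$. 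As $H^{0,3}(Y,EndE)=0$, the map $\delta$ is surjective; being a surjection between one-dimensional spaces it is an isomorphism, hence injective, hence $\mathrm{im}(r)=\ker\delta=0$ and $r=0$, as needed.

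The main obstacle is the simplicity of $E|_S$: Flenner's theorem (Theorem \ref{restriction thm}) delivers only semistability in general, whereas I need $h^0(S,EndE|_S)=1$, which requires genuine stability of $E|_S$ on the general anti-canonical $S$ (this is in any case what makes the restriction morphism $r:\mathfrak{M}_Y\to\mathfrak{M}_S$ well defined). Beyond this, the points to handle with care are the adjunction identity $K_S\cong\mathcal{O}_S$, the self-duality $EndE\cong(EndE)^{*}$ used in both Serre-duality computations, and the connectedness of $S$, which is exactly what guarantees that the relevant $\mathrm{Hom}$-spaces are one-dimensional.
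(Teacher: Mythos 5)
Your proposal takes exactly the same route as the paper: the paper's entire proof is the single sentence ``We tensor $0\rightarrow\mathcal{O}_{Y}(-S)\rightarrow\mathcal{O}_{Y}\rightarrow\mathcal{O}_{S}\rightarrow0$ with $EndE$ and take its cohomology,'' and your argument is this same tensoring-plus-long-exact-sequence, with the two endpoint vanishings (injectivity on the left, surjectivity on the right) made explicit via simplicity of $E$ and $E|_{S}$, Serre duality, and $K_{S}\cong\mathcal{O}_{S}$ --- precisely the details the paper leaves implicit, including the correctly flagged point that one needs $h^{0}(S,EndE|_{S})=1$ (simplicity of $E|_{S}$), which is built into the setup since $r$ maps to a moduli space of stable sheaves on $S$.
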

\begin{proof}
We tensor $0\rightarrow\mathcal{O}_{Y}(-S)\rightarrow\mathcal{O}_{Y}\rightarrow\mathcal{O}_{S}\rightarrow0$ with $EndE$ and take its cohomology.
\end{proof}
It was observed by Donaldson-Thomas \cite{dt} (in fact first by Tyurin \cite{tyurin}) that the transpose of the above sequence with respect to Serre duality pairings on $Y$ and $S$ remains the same, and $r(\mathfrak{M}_{Y})$ will be a complex Lagrangian submanifold of $\mathfrak{M}_{S}$ provided that $H^{0,2}(Y,EndE)=0$.
\begin{example}(Li-Qin's example, Proposition 5.4 of \cite{lq})

Let $S$ be a ($K3$) generic hyperplane section of type $(2,3)$ in $Y=\mathbb{P}^{1}\times \mathbb{P}^{2}$, and $L_{m}=\mathcal{O}_{Y}(1,m)$ be a polarization. Then the moduli space $\mathfrak{M}_{Y}$ of rank $2$, $L_{m}$-stable bundles on $Y$ with Chern classes $c_{1}=(\epsilon_{1},\epsilon_{2})$, $c_{2}=(-1,1)\cdot(\epsilon_{1}+1,\epsilon_{2}-1)$ is isomorphic
to $\mathbb{P}^{k}$, where $(\epsilon_{1},\epsilon_{2})=(0,1)$ or $(1,0)$ or $(1,1)$, and $k=(5+6\epsilon_{1}-3\epsilon_{2}-3\epsilon_{1}\epsilon_{2})$ respectively.

Furthermore, if $\frac{2(2-\epsilon_{2})}{2+\epsilon_{1}}<m<\frac{2(2-\epsilon_{2})}{\epsilon_{1}}$, the restriction map
\begin{equation}\mathfrak{M}_{Y}\hookrightarrow \mathfrak{M}_{S}  \nonumber \end{equation}
to a moduli space $\mathfrak{M}_{S}$ of $L_{m}|_{S}$-(semi)stable rank $2$ torsion-free sheaves
on $S$ with Chern classes $c_{1}|_{S}$, $c_{2}|_{S}$ is an imbedding of a complex Lagrangian into a compact hyper-K\"{a}hler manifold.

In particular, if $(\epsilon_{1},\epsilon_{2})=(0,1)$, for $m\geq2$, we have $\mathfrak{M}_{Y}\cong\mathbb{P}^{2}$ which is \textbf{not spin} and is not an \emph{orientable complex Lagrangian} based on Joyce's definition (see e.g. \cite{bbdjs}, \cite{joyce} or Definition 1.16 of \cite{bussi}). However, one could still categorify complex Lagrangian intersection $\mathfrak{M}_{Y}\cap \mathfrak{M}_{Y}\subseteq \mathfrak{M}_{S}$, where the cohomology theory is $H^{*}(\mathfrak{M}_{Y},\mathbb{C})$ (see e.g. \cite{bf2}).
\end{example}

The main difficulty of extending the above construction to general cases is that the restriction morphism is not well-defined for general stable sheaves. However, for ideal sheaves of subschemes, say $I_{Z}$'s, the restriction map is well-defined if $Z$'s are normal to the divisor $S\subseteq Y$, i.e. $Tor^{1}(\mathcal{O}_{Z},\mathcal{O}_{S})=0$ \footnote{See the work of Li and Wu \cite{liwu}, \cite{wu}.}. Then such ideal sheaves form an open subspace $U$ of the moduli scheme of ideal sheaves on $Y$ which has a restriction morphism $r: U\rightarrow Hilb^{*}(S)$ to a Hilbert scheme of points on $S$.
Then it is interesting to know when $\overline{Im(r)}\subseteq Hilb^{*}(S)$ is a complex Lagrangian subvariety.
\begin{proposition}\label{prop 2}
Let $L$ be a connected smooth complex quasi-projective variety, $M$ be an algebraic symplectic manifold, and $r: L\rightarrow M$ be an algebraic morphism whose underlying complex analytic map is an imbedding of a complex Lagrangian submanifold. Then the (Zariski) closure $\overline{Im(r)}$ in $M$ is a complex Lagrangian subvariety.
\end{proposition}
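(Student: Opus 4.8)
The plan is to reduce the statement to Proposition \ref{prop}, which already produces a complex Lagrangian subvariety from the Zariski closure of a \emph{given} complex Lagrangian subvariety. The real work is therefore to manufacture, out of the analytic datum that $r$ is an embedding of a complex Lagrangian submanifold, a genuine locally closed algebraic complex Lagrangian subvariety whose Zariski closure equals $\overline{Im(r)}$. First I would set $V\triangleq\overline{Im(r)}$ with its reduced structure; this is an irreducible closed subvariety of $M$, the irreducibility coming from that of $L$ (connected and smooth, hence irreducible) together with continuity of $r$. By Chevalley's theorem $Im(r)$ is a constructible subset of $V$, and since it is irreducible and Zariski-dense it contains a nonempty Zariski-open subset $U\subseteq V$. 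As $U$ is open in the closed set $V\subseteq M$ it is locally closed in $M$, and it is irreducible as a nonempty open subset of the irreducible $V$; note also $\dim V=\dim Im(r)=\dim L$.

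Next I would verify that $U$ is a complex Lagrangian submanifold, which amounts to showing that $U$ lies in the smooth locus of $V$ and that $V$ coincides analytically with the embedded Lagrangian $Im(r)$ along $U$. The key local comparison is the following: for $p\in U$, since $U$ is Zariski-open in $V$ it is open in $V$ also in the analytic topology, so there is a small analytic ball $B$ around $p$ with $V\cap B=U\cap B$; because $U\subseteq Im(r)\subseteq V$ this forces $V\cap B=Im(r)\cap B$, which is a complex submanifold of dimension $\dim L=\dim V$ since $r$ is an analytic embedding. Hence $p$ is a smooth point of $V$, so $U\subseteq V_{sm}$, and moreover $U$ is open in $Im(r)$. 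Being an open piece of the complex Lagrangian submanifold $Im(r)$, the set $U$ is itself a complex Lagrangian submanifold. Thus $U$ is an irreducible locally closed subvariety whose (full) smooth locus $U$ is a complex Lagrangian submanifold, i.e. a complex Lagrangian subvariety in the sense of the definition.

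Finally, since a nonempty open subset of the irreducible $V$ is dense, we have $\overline{U}=V=\overline{Im(r)}$, so Proposition \ref{prop} applied to the complex Lagrangian subvariety $U$ shows that $\overline{Im(r)}$ is a complex Lagrangian subvariety, as claimed. The hard part is exactly the algebraic--analytic bridge in the middle paragraph: one is handed only that $r$ is \emph{analytically} an embedding, and this must be transported to the Zariski topology, using Chevalley's theorem to get the dense open $U$ and the identification of analytic with algebraic smoothness for complex varieties to read off the local equality $V\cap B=Im(r)\cap B$. Once that local equality is in hand the Lagrangian condition passes over for free and the reduction to Proposition \ref{prop} is immediate.
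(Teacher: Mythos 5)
Your argument is correct, and its skeleton is the same as the paper's: both proofs use Chevalley's theorem to extract a subset $U\subseteq Im(r)$ that is Zariski-open and dense in $\overline{Im(r)}$, check that $U$ is an irreducible locally closed complex Lagrangian submanifold, and then conclude by applying Proposition \ref{prop}. Where you genuinely diverge is in the algebro-analytic bridge that makes $U$ a \emph{smooth algebraic} locally closed subvariety. The paper argues globally through the morphism: since $U$ is analytically open in the embedded submanifold $Im(r)$, the algebraic morphism $r\colon r^{-1}(U)\rightarrow U$ is an analytic isomorphism, hence an algebraic isomorphism by Serre's GAGA (Proposition 9 of \cite{serre}), so $U$ inherits smoothness from the open subvariety $r^{-1}(U)\subseteq L$. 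You argue locally through the points: your ball argument $V\cap B=U\cap B=Im(r)\cap B$ is valid (using that an embedded submanifold is locally a closed slice), it shows the analytic germ of $V=\overline{Im(r)}$ at each $p\in U$ is a manifold germ of dimension $\dim L$, and then analytic regularity forces algebraic regularity of $\mathcal{O}_{V,p}$ because regularity is detected on the common completion of $\mathcal{O}_{V,p}$ and of the analytic local ring. Both bridges are standard facts of GAGA type, and each buys something: the paper's version yields more for free, namely that the parametrization $r$ itself becomes an algebraic isomorphism over $U$, identifying $U$ with an open subvariety of $L$; your version is more local and uses the algebraicity of $r$ only through Chevalley's theorem (constructibility of the image), with everything else read off from the analytic embedding hypothesis. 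The one step you should anchor with a precise reference is ``analytic smoothness implies algebraic smoothness'' (the completion argument above, cf. SGA 1, Exp. XII), since in your proof that statement carries exactly the weight that Proposition 9 of \cite{serre} carries in the paper's.
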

\begin{proof}
The morphism $r$ factors through a morphism $r: L\rightarrow Im(r)_{sch}$ to the scheme theoretic image of $r$, which is the 'smallest' closed subset of $M$ containing the set $Im(r)$. As $L$ is reduced, the scheme theoretic image of $r$ coincides with the closure $\overline{Im(r)}$ (with reduced structures) of the image of $r$ (see e.g. 8.3.A of \cite{vakil}). As $L$ and $\overline{Im(r)}$ are both algebraic (they are closed subvarieties of algebraic varieties), we can find a subset $U\subseteq Im(r)$ which is dense and open in $\overline{Im(r)}$ by Chevalley's lemma (see e.g. Lemma 2 of \cite{serre}). Then we have a morphism $r: r^{-1}(U)\rightarrow U$ whose underlying complex analytic map is an isomorphism. By Serre's GAGA principle (Proposition 9 of \cite{serre}), it is also an isomorphism between algebraic schemes. The connectedness of $L$ implies that $r^{-1}(U)$, $U$ are both irreducible, so is $\overline{Im(r)}$. Since $U$ is an irreducible locally closed complex Lagrangian submanifold of $M$, by Proposition \ref{prop}, $\overline{U}=\overline{Im(r)}\subseteq M$ is a complex Lagrangian subvariety.
\end{proof}
We apply the above proposition to construct compact complex Lagrangian subvarieties.
\begin{example}(Generic quartics in $\mathbb{P}^{3}$)

Let $S$ be a generic quartic surface in $Y=\mathbb{P}^{3}$ as its anti-canonical divisor. \\
(i) We take the primitive curve class $[H]\in H_{2}(Y,\mathbb{Z})$. Ideal sheaves of curves representing this class have Chern character $c=(1,0,-PD([H]),1)$. We consider the moduli space $I_{1}(Y,[H])$ of such ideal sheaves on $Y$ and $I_{1}(Y,[H])\cong Gr(2,4)$. As $S$ is a projective $K3$ surface and contains only one primitive rational curve, denoted by $C_{0}$ \cite{yauzaslow, bea, bleung, chenxi}, we obtain a well-defined restriction morphism
\begin{equation}r: I_{1}(Y,[H])\backslash \{I_{C_{0}}\} \rightarrow Hilb^{4}(S),  \nonumber \end{equation}
\begin{equation}r: I_{C}\mapsto I_{C}|_{S},  \nonumber \end{equation}
which is injective with smooth image. By direct calculations, for any $I_{C}\in I_{1}(Y,[H])$, we have
\begin{equation}Ext^{1}_{Y}(I_{C},I_{C})\cong \mathbb{C}^{4}, \quad Ext^{i\geq2}_{Y}(I_{C},I_{C})=0. \nonumber \end{equation}
Then the injective restriction map determines a complex Lagrangian submanifold
\begin{equation}Gr(2,4)\backslash \{pt\}\subseteq Hilb^{4}(S). \nonumber \end{equation}
Its closure $\overline{Im(r)}\subseteq Hilb^{4}(S)$ is a complex Lagrangian subvariety by Proposition \ref{prop 2}.  \\
${}$ \\
(ii) We take the degree $2$ curve class $[2H]\in H_{2}(Y,\mathbb{Z})$, and ideal sheaves of curves representing this class have Chern character $c=(1,0,-PD([2H]),3)$. We consider the moduli space $I_{3}(Y,[2H])$ of such ideal sheaves, and $I_{3}(Y,[2H])$ is a $\mathbb{P}^{5}$-bundle over $\mathbb{P}^{3}$ \footnote{See Remark 2.2.7 of \cite{deland} for reference.}. As $S$ contains only one rational curve of degree $2$
\cite{yauzaslow, leeleung, wu0, kmps, pt1} similarly as before, we have a well-defined restriction morphism
\begin{equation}r: I_{3}(Y,[2H])\backslash \{pt\} \rightarrow Hilb^{8}(S),  \nonumber \end{equation}
\begin{equation}r: I_{C}\mapsto I_{C}|_{S},  \nonumber \end{equation}
which is injective with smooth image. By direct calculations, for any $I_{C}\in I_{3}(Y,[2H])$, we have
\begin{equation}Ext^{1}_{Y}(I_{C},I_{C})\cong \mathbb{C}^{8}, \quad Ext^{i\geq2}_{Y}(I_{C},I_{C})=0. \nonumber \end{equation}
Then the injective restriction map determines a complex Lagrangian submanifold
\begin{equation}I_{3}(Y,[2H])\backslash \{pt\} \subseteq Hilb^{8}(S) \nonumber \end{equation}
and a compact complex Lagrangian subvariety $\overline{Im(r)}\subseteq Hilb^{8}(S)$.
\end{example}
\begin{remark}
One could consider $K3$ surfaces as anti-canonical divisors of other projective 3-folds and give more examples of complex Lagrangians inside hyper-K\"{a}hler manifolds.

For instance, we take a generic degree $1\leq d\leq 4$ hypersurface $Y\subseteq \mathbb{P}^{4}$ and consider the primitive class $[H]\in H_{2}(Y,\mathbb{Z})$. Ideal sheaves of curves representing this class have Chern character $c=(1,0,-PD([H]),\frac{3-d}{2})$, and we denote their moduli space by $I_{\frac{3-d}{2}}(Y,[H])$. By Theorem 4.3 in Chapter V of \cite{kollar}, $I_{\frac{3-d}{2}}(Y,[H])$ is a smooth connected projective variety of dimension $(5-d)$. By Chapter V, 4.4 of \cite{kollar} and direct calculations, for any $I_{C}\in I_{\frac{3-d}{2}}(Y,[H])$,
\begin{equation}Ext^{1}_{Y}(I_{C},I_{C})\cong\mathbb{C}^{5-d}, \quad Ext^{i\geq2}_{Y}(I_{C},I_{C})=0. \nonumber \end{equation}
Then the restriction morphism to the Hilbert scheme of a generic anti-canonical divisor $S\subseteq Y$,
\begin{equation}r: I_{\frac{3-d}{2}}(Y,[H])\backslash \{pt\} \rightarrow Hilb^{5-d}(S),  \nonumber \end{equation}
\begin{equation}r: I_{C}\mapsto I_{C}|_{S},  \nonumber \end{equation}
determines a complex Lagrangian submanifold
\begin{equation}I_{\frac{3-d}{2}}(Y,[H])\backslash \{pt\}\subseteq Hilb^{5-d}(S) \nonumber \end{equation}
and a compact complex Lagrangian subvariety $\overline{Im(r)}\subseteq Hilb^{5-d}(S)$.
\end{remark}

\end{document}